\documentclass[11pt,reqno]{amsart}

\usepackage[T1]{fontenc}
\usepackage[utf8]{inputenc}
\usepackage{amsmath,amsfonts,amssymb,amsthm}
\usepackage{mathrsfs}
\usepackage[margin=1.1in]{geometry}
\usepackage[colorlinks=true,linkcolor=blue,citecolor=blue,urlcolor=blue]{hyperref}

\theoremstyle{plain}
\newtheorem{theorem}{Theorem}
\newtheorem{lemma}{Lemma}
\newtheorem{corollary}{Corollary}

\newtheorem{thmA}{Theorem}

\theoremstyle{definition}
\newtheorem{remark}{Remark}

\renewcommand{\le}{\leqslant}
\renewcommand{\ge}{\geqslant}

\newcommand{\bC}[2]{C_{#1}^{#2}}   
\newcommand{\EE}{\mathsf{E}}
\newcommand{\PP}{\mathsf{P}}
\newcommand{\DD}{\mathsf{D}}

\begin{document}

\title[On the stability of the independence number]
      {On the Stability of the Independence Number of the Distance\\
       Graph $G(n,r,<s)$ for Growing $r$ and $s$}

\author{V.~A.~Pokhachevskiy}
\address{Lomonosov Moscow State University, Moscow, Russia}
\email{pokhachevskiy@gmail.com}

\author{A.~M.~Raigorodskii}
\address{Moscow Institute of Physics and Technology (National Research
University), Dolgoprudny, Moscow Region, Russia;
Lomonosov Moscow State University, Moscow, Russia;
Caucasus Mathematical Center, Adyghe State University, Maikop, Russia;
Buryat State University, Institute for Mathematics and Informatics,
Ulan-Ude, Russia}
\email{mraigor@yandex.ru}

\subjclass[2020]{05D05, 05C80, 05C69}

\date{\today}

\thanks{This paper has been submitted to \emph{Problems of Information Transmission}
(\emph{Problemy Peredachi Informatsii}).}

\begin{abstract}
We consider a random subgraph $G_p(n,r,<s)$ of the complete distance graph
$G(n,r,<s)$ whose vertices are the $r$-element subsets of the set
$\{1,\dots,n\}$ and whose edges join pairs of subsets that intersect in fewer
than $s$ elements; each edge survives independently of the others with
probability $p$. The stability of the Erd\H{o}s--Ko--Rado theorem is the event
that the independence number of the random subgraph equals that of the original
graph, $N=\bC{n-s}{r-s}$. For fixed $r$ and $s$, two-sided bounds for the
critical probability, whose ratio depends on $r$ and $s$, were obtained by
P.~A.~Ogarok and A.~M.~Raigorodskii. In this paper we establish the order of the
critical probability for growing $r=r(n)$ and $s=s(n)$: if
$\min\{\bC{r}{s}r^2,\,sr^3\}=o(n)$, then the upper and lower bounds differ only
by an absolute constant factor. The proof is elementary; only one of the two
branches of the growth condition uses the complete nontrivial-intersection
theorem of Ahlswede--Khachatrian.
\end{abstract}

\maketitle

\medskip\noindent\textbf{Keywords:} Erd\H{o}s--Ko--Rado theorem, distance graph,
Kneser graph, random graph, independence number, stability, threshold
probability.

\section{Introduction and statement of the result}
\label{sec:intro}

Let $n$, $r$, $s$ be positive integers, $s<r<n$, and
$[n]=\{1,\dots,n\}$. Consider the graph
$G(n,r,<s)=\bigl(V(n,r),E(n,r,s)\bigr)$, where
\begin{displaymath}
V(n,r)=\bigl\{x\subset[n]\colon|x|=r\bigr\},\qquad
E(n,r,s)=\bigl\{\{x,y\}\colon|x\cap y|<s\bigr\}.
\end{displaymath}
The graph $G(n,r,<s)$ is called a \emph{complete distance graph}; for
$s=1$ it is the Kneser graph. The independent sets of the graph
$G(n,r,<s)$ are exactly the \emph{$s$-intersecting} families of
$r$-element sets, so that the question of the independence number
$\alpha\bigl(G(n,r,<s)\bigr)$ is the classical
Erd\H{o}s--Ko--Rado problem \cite{EKR}. The simplest example of an
$s$-intersecting family is a \emph{star}
$S_W=\{x\in V(n,r)\colon x\supseteq W\}$ centered at $W\subset[n]$,
$|W|=s$; its cardinality equals $\bC{n-s}{r-s}$. By the
Frankl--Wilson theorem and the Ahlswede--Khachatrian complete
intersection theorem (see~\cite{Frankl,Wilson,AK}), for
$n\ge(r-s+1)(s+1)$
\begin{displaymath}
\alpha\bigl(G(n,r,<s)\bigr)=\bC{n-s}{r-s},
\end{displaymath}
and moreover for $n>(r-s+1)(s+1)$ the maximum is attained only on stars.

Let $p=p(n)\in[0,1]$. By $G_p(n,r,<s)$ we denote the random
subgraph of the graph $G(n,r,<s)$ in which each edge survives
independently of the others with probability $p$. Since the
independence number does not decrease when edges are deleted, one
always has $\alpha\bigl(G_p(n,r,<s)\bigr)\ge\bC{n-s}{r-s}$. The main
question is to find the critical probability upon crossing which the property
\begin{equation}
\label{eq:stability}
\alpha\bigl(G_p(n,r,<s)\bigr)=\bC{n-s}{r-s}
\end{equation}
(the \emph{stability} of the Erd\H{o}s--Ko--Rado theorem) is replaced
by its negation. The history of this question is presented in detail in
\cite{BGPR2,Pyad15,BNR,OgRai,Rai26,RaiSin}; we recall only the results
we need.

For the Kneser case $s=1$, Bollob\'as, Narayanan and Raigorodskii
\cite{BNR} found the sharp threshold: for $r=r(n)=o(n^{1/3})$, $r\ge2$,
the critical probability is
\begin{equation}
\label{eq:bnr}
p_c(n,r)=\frac{(r+1)\ln n-r\ln r}{\bC{n-1}{r-1}}
\end{equation}
(later, in \cite{DasTran,DevlinKahn,BKL}, the results for $s=1$ were
extended to the whole range $n\ge2r+1$). For arbitrary
\emph{fixed} $r$ and $s$, two-sided bounds for the threshold were
obtained by P.\,A.~Ogarok and A.\,M.~Raigorodskii \cite{OgRai}.

\begin{thmA}[Ogarok, Raigorodskii \cite{OgRai}]
\label{thOg}
Let $r>s\ge1$ be fixed positive integers, and let $\varepsilon>0$ be
fixed. Then for $p=2s\bC{r}{s}\ln n\big/\bC{n-s}{r-s}$ and under the
additional assumption $r>3$, asymptotically almost surely
\eqref{eq:stability} holds, and moreover all independent sets of
maximum cardinality in $G_p(n,r,<s)$ are stars; for
$p=(1-\varepsilon)(r+s)\ln n\big/\bC{n-s}{r-s}$, asymptotically almost
surely $\alpha\bigl(G_p(n,r,<s)\bigr)\ge\bC{n-s}{r-s}+1$.
\end{thmA}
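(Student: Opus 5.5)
The theorem has two halves, and I will handle them separately: an upper bound, where $p$ is large enough that only stars survive as maximum independent sets, and a lower bound, where $p$ is small enough that some star can be enlarged by one set. Throughout, $N=\bC{n-s}{r-s}$, and $r,s$ are fixed.

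\emph{Lower bound ($p=(1-\varepsilon)(r+s)\ln n/N$).} I want an independent set of size $N+1$, built as a star $S_W$ plus one extra vertex. Take $x\in V(n,r)$ with $|x\cap W|=s-1$. Its $G$-neighbours inside $S_W$ are the $y\supseteq W$ with $|x\cap y|<s$. Since $y$ already contains the $s-1$ elements of $x\cap W$, this means $y\setminus W$ avoids $x\setminus W$, so there are $\bC{n-r-1}{r-s}=(1-o(1))N$ of them. Let $X_{W,x}$ be the indicator that none of these edges survives. Then
\[
\PP(X_{W,x}=1)=(1-p)^{(1-o(1))N}=n^{-(1-\varepsilon)(r+s)(1+o(1))}.
\]
The number of pairs $(W,x)$ is $\Theta\bigl(n^{s}\cdot n^{r-s+1}\bigr)=\Theta(n^{r+1})$. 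That exponent falls short of $(1-\varepsilon)(r+s)$ when $s>1$, so the plain single-swap count is too weak.

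I will therefore use a richer configuration. Take an $s$-set $W$ and count pairs $(x,W')$ where $W'$ is another centre and $x$ is a vertex nearly in $S_W$, chosen so that the number of relevant edges is still about $N$. The aim is to make the configuration count reach $n^{r+s}$. A natural candidate is to let $x$ range over all $r$-sets with $|x\cap W|=s-1$ while allowing $W$ to vary; the count is then $\bC{n}{s}\cdot s\cdot\bC{n-s}{r-s+1}\sim c\,n^{r+1}$. To reach $n^{r+s}$ I would instead remove from $S_W$ the few stars' worth of blocking vertices and add several sets; this is the genuinely delicate step, and I expect to follow Ogarok--Raigorodskii's construction there.

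Whichever configuration is used, the conclusion comes from the second-moment method applied to $X=\sum X_{W,x}$. Pairs of configurations with disjoint edge sets are independent, and overlapping pairs contribute $o((\EE X)^2)$ because $r$ and $s$ are fixed. Chebyshev's inequality then gives $X>0$ asymptotically almost surely.

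\emph{Upper bound ($p=2s\bC{r}{s}\ln n/N$).} I will show that asymptotically almost surely every independent set $F$ with $|F|\ge N$ is a star. Split according to $\tau$, the largest intersection of $F$ with any star.

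If $F$ is close to a star $S_W$, meaning $F=(S_W\setminus A)\cup B$ with $B\ne\emptyset$ and $B\cap S_W=\emptyset$, then $|A|\ge|B|$. Each $x\in B$ has at least $\bC{n-r-1}{r-s}\sim N$ $G$-neighbours in $S_W$ (the worst case being $|x\cap W|=s-1$), and all but at most $|A|$ of these lie in $F$, so all those edges must have been deleted. I will take a union bound over $W$, $B$ and $A$, handled by cases on $|B|=b$: sets $x$ with smaller $|x\cap W|$ have even more neighbours, and $A$ only needs to contain the non-deleted neighbours of $B$. This gives roughly
\[
\sum_{b}\Bigl(n^{r-s+1}\cdot n^{s}\,e^{-pN(1-o(1))}\Bigr)^{b}\ \text{plus corrections}.
\]
With $pN=2s\bC{r}{s}\ln n\ge(r+s+1)\ln n$, which holds because $r>3$ gives enough slack, this sum is $o(1)$.

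If $F$ is far from every star, then by Frankl--Wilson/Ahlswede--Khachatrian stability (Hilton--Milner type bounds) any $s$-intersecting family not contained in a star has size $O(n^{r-s-1})=o(N)$. So $F$ cannot be $s$-intersecting in $G$, and it contains many $G$-edges that must all have been deleted. I will count such $F$ through containers: $F$ is determined by a small set of $O(n^{r-s-1})$ "core" vertices and the remaining choices are constrained. This yields a union bound of the form $\binom{\bC nr}{k}(1-p)^{\Omega(kN)}\to0$.

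The main obstacle is the far-from-star case. Controlling the number of candidate families against $(1-p)^{\#\text{edges}}$ is where the factor $2s\bC{r}{s}$ gets used, through the degree of a vertex into a star, $\bC{r}{s}$ choices of sub-centre, and the removal–addition balance. There I would first try a greedy argument: repeatedly pick a vertex of $F$ outside the current best star, which forces deletion of about $N/\bC rs$ fresh edges, while each step has at most $n^{r}$ choices.
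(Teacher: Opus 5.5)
The paper quotes Theorem~\ref{thOg} from \cite{OgRai} rather than reproving it. Sections~\ref{sec:upper}--\ref{sec:lower} run the same scheme, and Theorem~\ref{th:main}(ii), applied with $\varepsilon/2$, already contains the lower half for fixed $r,s$, since $(1-\varepsilon)(r+s)\ln n\le(1-\varepsilon/2)\bigl(r\ln\frac nr+s\ln\frac ns\bigr)$ for large $n$. Against that scheme your proposal has two genuine gaps.

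\emph{Lower bound.} This gap is self-inflicted: you do not need $|x\cap W|=s-1$, and no ``richer configuration'' is required. For \emph{every} vertex $A\notin S_W$, the set $S_W\cup\{A\}$ is independent in $G_p$ if and only if all $G$-edges from $A$ into $S_W$ are absent. There are at most $|S_W|=N$ such edges, so this happens with probability at least $(1-p)^N=e^{-pN-o(1)}$, whatever $|A\cap W|$ is. A smaller $|A\cap W|$ does mean more neighbours in $S_W$, but only by a factor $1+O(r^2/n)$, by Lemma~\ref{lem:Mt}. The number of pairs $(W,A)$ is $\bC ns\bigl(\bC nr-N\bigr)=\Theta(n^{r+s})$, which is exactly where the $r+s$ in the threshold comes from. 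Hence $\EE Y\ge n^{\varepsilon(r+s)+o(1)}\to\infty$. Your second-moment sketch then goes through for this family, as in Section~\ref{sec:lower}: pairs with $A\ne A'$ share at most the single edge $\{A,A'\}$, and pairs with $A=A'$, $W\ne W'$ share at most $\bC{n-s-1}{r-s-1}\le Nr/n$ edges and form only a $1/(V-N)$ fraction of all pairs. As written, though, your lower bound rests on an unspecified construction, and the count you actually carry out, $\Theta(n^{r+1})$, is too small for $s\ge2$.

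\emph{Upper bound, far-from-star case.} You rightly call this the main obstacle, but you do not resolve it. Knowing from a Hilton--Milner-type bound that $F$ (with $|F|=N$ and small $\deg F$) is not $s$-intersecting only guarantees one $G$-edge inside $F$. A union bound over all $\bC VN\le(eV/N)^N=e^{(s+o(1))N\ln n}$ candidates needs $p\,e(F)>(s+o(1))N\ln n$, i.e.\ $e(F)\gtrsim N^2/(2\bC rs)$ at $p=2s\bC rs\ln n/N$. The container idea is not developed. The greedy fails: $n^r$ choices per step against $N/\bC rs$ forced-absent edges gives a factor $n^re^{-pN/\bC rs}=n^{r-2s}$ per step, which is $\ge1$ once $r\ge2s$.

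The paper gets the quadratic bound from Lemma~\ref{lem:Lstar}. The vertices with $\nu(x)\ge\frac34M$ form an $s$-intersecting family, because two $G$-adjacent vertices have at most $\Phi<M/2$ common non-neighbours. So there are at most $\max\{\deg D,\Psi\}$ such vertices, and $e(D)\ge M^2/16$ follows.

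For fixed $r,s$ and this $p$, an even simpler fact suffices. Every non-neighbour of $x$ in $F$ lies in one of the $\bC rs$ stars $S_T$ with $T\subseteq x$, so $\nu(x)\le\bC rs\deg F$ and $e(F)\ge\frac N2\bigl(N-1-\bC rs\deg F\bigr)$. This settles $\deg F<\frac{2N}{3\bC rs}$, using $\bC rs\ge r\ge4$. For $\frac{2N}{3\bC rs}\le\deg F\le(1-\varepsilon_0)N$, the triple count $(W,A_1,A_2)$ of Case~2 works, because $e^{-p\deg F}\le n^{-4s/3}$ beats $V/N\le n^s$.

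\emph{Upper bound, near-star case.} Your estimate here also needs repair. First, $|F|\ge N$ gives $|B|\ge|A|$, not $|A|\ge|B|$. Second, $B$ ranges over all $\Theta(n^r)$ vertices outside $S_W$, not $n^{r-s+1}$, and $W$ is chosen only once. Third, the $\bC N{|A|}$ choices of $A$ must be paid for. The paper does this by passing to maximal independent sets, which yields the compensating factor $(jp)^i$ in \eqref{eq:Xij}.
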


For growing $r=r(n)$, $s=s(n)$, the known sufficient conditions for
stability have a completely different, polynomial in $n$, scale: in
\cite{KOR,RaiKaras} stability is proved, in essence, for
$p\gg r^3\ln n/n$ (for $r-s\ge2$; for $r-s=1$ more is required there),
and in \cite{PR}, for $r,s\to\infty$, $s=o(r)$, $r^2=o(n)$, for
$p\ge16sr^2\ln(n/r)/n$ (this condition is meaningful for
$sr^2=o(n/\ln n)$). Meanwhile, for fixed parameters the true
threshold, as Theorem~\ref{thOg} shows, has order
$\ln n\big/\bC{n-s}{r-s}\asymp n^{-(r-s)}\ln n$, i.e., for $r-s\ge2$ it
is incomparably smaller.

The aim of the present paper is to establish the \emph{order} of the
critical probability for growing parameters. Throughout what follows,
\begin{displaymath}
V:=\bC{n}{r},\qquad N:=\bC{n-s}{r-s},\qquad c:=\bC{r}{s},\qquad
\Lambda:=\ln\bigl(\bC{n}{s}(V-N)\bigr)
\end{displaymath}
(the symbol $V$ without arguments always denotes the number of
vertices $\bC{n}{r}$, whereas $V(n,r)$ is the vertex set itself). The
quantity $e^{\Lambda}$ is exactly the number of pairs $(W,A)$, where
$W$ is the center of a star and $A\notin S_W$ is a vertex; such pairs
give the simplest configurations destroying stability. Under the
hypotheses of Theorem~\ref{th:main} below,
\begin{displaymath}
\Lambda=(1+o(1))\Bigl(r\ln\frac nr+s\ln\frac ns\Bigr);
\end{displaymath}
this will be shown in Section~\ref{sec:prelim} (see \eqref{eq:Lam}
and~\eqref{eq:LamL}).

\begin{theorem}
\label{th:main}
Let $n\to\infty$, and let $r=r(n)$ and $s=s(n)$ be such that $r\ge2$,
$1\le s<r$ and
\begin{equation}
\label{eq:uslovie}
\min\bigl\{\bC{r}{s}\,r^2,\ s\,r^3\bigr\}=o(n).
\end{equation}
Then the following assertions hold.
\begin{itemize}
\item[{\rm(i)}] For
\begin{displaymath}
p\ \ge\ p_1(n):=\frac{64\,\Lambda}{N}
=\frac{64\,\ln\bigl(\bC{n}{s}(V-N)\bigr)}{\bC{n-s}{r-s}}
\end{displaymath}
asymptotically almost surely
$\alpha\bigl(G_p(n,r,<s)\bigr)=\bC{n-s}{r-s}$ and all independent
sets of maximum cardinality in the graph $G_p(n,r,<s)$ are stars.
\item[{\rm(ii)}] For any fixed $\varepsilon\in(0,1)$, for
\begin{displaymath}
p\ \le\ p_2(n):=
\frac{(1-\varepsilon)\Bigl(r\ln\dfrac{n}{r}+s\ln\dfrac{n}{s}\Bigr)}
{\bC{n-s}{r-s}}
\end{displaymath}
asymptotically almost surely
$\alpha\bigl(G_p(n,r,<s)\bigr)\ge\bC{n-s}{r-s}+1$.
\end{itemize}
\end{theorem}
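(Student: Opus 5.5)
Both parts concern the same destroying configurations: a star $S_W$ together with an outside vertex $A\notin S_W$ such that, in $G_p$, no edge joins $A$ to any member of $S_W$. Call such a pair $(W,A)$ \emph{bad}. It is $A$'s neighbours in $G(n,r,<s)$ that matter, namely the set $D(W,A)=\{x\in S_W\colon |x\cap A|<s\}$. The pair $(W,A)$ is bad with probability $(1-p)^{|D(W,A)|}$.

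\textbf{Part (ii).} We use the second moment method on the number $X$ of bad pairs with $|A\cap W|=s-1$. For such pairs, $|D(W,A)|$ is nearly all of $N$: a star member $x\supseteq W$ has $|x\cap A|\ge s$ only if $x$ picks up a further element of $A\setminus W$. Hence $|D|=N(1-O(r^2/n))$. Under \eqref{eq:uslovie} we have $r^2=o(n)$, so $p|D|\le(1-\varepsilon+o(1))(r\ln\frac nr+s\ln\frac ns)$. The number of such pairs is $\bC{n}{s}\,s\,\bC{n-s}{r-s+1}\bC{?}{}$-type counts; one checks that its logarithm is $(1+o(1))(r\ln\frac nr+s\ln\frac ns)$, by the same estimate as for $\Lambda$. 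Therefore $\EE X\to\infty$ polynomially fast in $e^{\varepsilon\Lambda}$.

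For $\DD X$ we split pairs of bad pairs $(W,A),(W',A')$ by overlap. The correlation comes only from edges shared between $D(W,A)$ and $D(W',A')$, which requires $A=A'$ and $W\neq W'$, or $W=W'$. In the first case the shared set $S_W\cap S_{W'}$ has size at most $\bC{n-s-1}{r-s-1}=O(rN/n)$, so the covariance is negligible. In the case $W=W'$ with $A\ne A'$ the edges are distinct, so the events are independent. In the case $A=A'$ and $W\ne W'$ the common part is again small. This gives $\DD X=o((\EE X)^2)$. A bad pair gives the independent set $S_W\cup\{A\}$ of size $N+1$.

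\textbf{Part (i).} Let $\mathcal F$ be an independent set of $G_p$ of size at least $N$ that is not a star. We pick $W$ maximizing $|\mathcal F\cap S_W|$ and split into two cases.

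In the first case $\mathcal F$ is close to the star, with $|\mathcal F\setminus S_W|=k$ small. Then each $A\in\mathcal F\setminus S_W$ must be non-adjacent in $G_p$ to $\mathcal F\cap S_W$, which contains at least $N-k$ elements of $S_W$ and so at least $|D(W,A)|-k$ neighbours of $A$. We need the lower bound $|D(W,A)|\ge N/2$ for every $A\notin S_W$; this is the key counting step, and it follows from $r^2=o(n)$. A union bound over $W$, the $k$ outside sets, and the $k$ missing star members gives at most $\bigl(e^{\Lambda}\bC{N}{k}\bigr)\cdots e^{-pk(N/2-k)}$. With $p\ge64\Lambda/N$ this sums to $o(1)$ for $k\le N/4$.

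In the second case $\mathcal F$ is far from every star, i.e. $|\mathcal F\cap S_W|<3N/4$ for every $W$. Here we need a stability (Hilton--Milner type) bound: a nontrivially $s$-intersecting family, or more generally a family far from stars, has at most $\theta N$ pairs of $s$-intersecting members missing. Equivalently, such an $\mathcal F$ spans at least $\delta |\mathcal F|^2$ edges of $G(n,r,<s)$. All of these edges must be absent in $G_p$. This is exactly where the two branches of \eqref{eq:uslovie} enter. Under $\bC rs r^2=o(n)$ one can use a direct elementary count: a family with no dominant center contains many disjoint-ish pairs, obtained by a kernel argument with $c$ possible centers. Under $sr^3=o(n)$ one invokes the Ahlswede--Khachatrian complete nontrivial-intersection theorem, which bounds non-star intersecting families by $(1-\Omega(1))N$.

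We then apply a container-style union bound. Families of size $N$ number at most $\bC{V}{N}\le e^{N\ln(eV/N)}$, while the probability that all $\delta N^2$ edges are absent is $e^{-p\delta N^2}=e^{-64\delta N\Lambda}$. Since $\ln(V/N)\lesssim\Lambda$, this works provided $64\delta>1$. I expect this second case to be the main obstacle. Getting a constant $\delta$ uniformly from the nontrivial-intersection bounds with growing $r,s$ is delicate, and the constant $64$ must absorb it. If needed, I would instead count via the "far" structure: choose a small generating subfamily, i.e. a kernel of size $O(\Lambda/\ln)$, to reduce the entropy of the union bound.
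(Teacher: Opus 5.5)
Your proposal has two genuine gaps, one in each part.

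\textbf{Part (ii).} Restricting to pairs with $|A\cap W|=s-1$ breaks the first moment. The claim that the logarithm of their number is $(1+o(1))(r\ln\frac nr+s\ln\frac ns)$ is false: these are the \emph{rarest} pairs, since a typical $A$ is disjoint from $W$. Take $r=3$, $s=2$, which is allowed by \eqref{eq:uslovie}. There are $\bC n2\cdot2\cdot\bC{n-2}{2}\asymp n^4$ such pairs, while $L=(5+o(1))\ln n$. Each pair is bad with probability $(1-p)^{N-2}=n^{-5(1-\varepsilon)+o(1)}$, so $\EE X=n^{-1+5\varepsilon+o(1)}\to0$ for $\varepsilon<1/5$, and the second-moment method proves nothing. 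The same failure occurs for growing parameters whenever $s\asymp r$; for example, with $s=r-1$, $r\to\infty$, $r^3=o(n)$, your count has logarithm only about $L/2$. The restriction is also unnecessary. Lemma~\ref{lem:Mt} gives $|D(W,A)|\ge(1-\delta)N$ for \emph{every} $A\notin S_W$, and using all $e^{\Lambda}$ pairs, as the paper does, gives $\EE Y\ge e^{\varepsilon\Lambda-2}$. (In the variance you also overlooked pairs with $A\ne A'$, $W\ne W'$ that share the single edge $\{A,A'\}$; this costs only a factor $(1-p)^{-1}\le1+2p$.)

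\textbf{Part (i).} Your near-star case is fine. With the constant $64$ the plain union bound already suffices there, so you do not even need the paper's passage to maximal independent sets. The far-from-star case, however, is the heart of the theorem, and your proposal does not contain an argument for it.

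For $N/16\le m:=\deg\mathcal F<3N/4$ nothing deep is needed. Each of the $N-m$ members of $\mathcal F$ outside the best star $S_W$ has at least $m-\delta N$ neighbours in $\mathcal F\cap S_W$ by Lemma~\ref{lem:Mt}, so $e(\mathcal F)\ge(N-m)(m-\delta N)\ge(\frac{15}{256}-o(1))N^2$.

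The real difficulty is $\deg\mathcal F<N/16$. There you need a supersaturation statement, $e(\mathcal F)\ge\eta N^2$. This is \emph{not} equivalent to a Hilton--Milner or Ahlswede--Khachatrian bound, which concerns only families spanning no edges at all. The natural elementary count over the $c$ possible $s$-subsets of a vertex does not rescue it either: $\sum_{x\in\mathcal F}\nu(x)\le\sum_T|\mathcal F\cap S_T|^2\le c\,N\deg\mathcal F$ gives $\Omega(N^2)$ edges only when $\deg\mathcal F\lesssim N/c$.

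The missing idea is Lemma~\ref{lem:Lstar}. Call $x\in\mathcal F$ bad if it has at least $\frac34N$ non-neighbours in $\mathcal F$. Two bad vertices joined by an edge of $G$ would have at least $N/2$ common non-neighbours, which is impossible when $\Phi<N/2$. So the bad vertices form an $s$-intersecting family. Its size is at most $\deg\mathcal F$ if it is trivial and at most $\Psi$ otherwise, hence at most $N/16$, and counting non-neighbours then gives $e(\mathcal F)\ge N^2/16$.

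Condition \eqref{eq:uslovie} enters only through $\Phi<N/2$ and $\Psi\le N/16$. In the first branch, the certificate lemma gives $\Phi,\ \Psi-1\le c\,r^2N/n$. In the second branch, the factor $sr^3$ comes from the refined bound $\Phi\le2e^2r^3N/n$ for $n\ge15sr^3$ (Lemma~\ref{lem:cert2}). Ahlswede--Khachatrian is used only to get $\Psi\le2r^2N/n=o(N)$ (Lemma~\ref{lem:HMF}). A bound of the form $(1-\Omega(1))N$ on nontrivial families, as you phrase it, would not be enough.
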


\begin{corollary}
\label{sled:order}
Under the hypotheses of Theorem~\ref{th:main}, the critical
probability for stability has order
\begin{displaymath}
\frac{\Lambda}{N}\ \asymp\
\frac{r\ln\frac nr+s\ln\frac ns}{\bC{n-s}{r-s}}\,;
\end{displaymath}
more precisely, $p_1/p_2\le64/(1-\varepsilon)+o(1)$, i.e., the upper
and lower bounds differ only by an absolute constant factor.
\end{corollary}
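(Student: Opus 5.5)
The plan is to combine the two parts of Theorem~\ref{th:main} with the asymptotics of $\Lambda$. Write $L:=r\ln\frac nr+s\ln\frac ns$, so that $p_1=64\Lambda/N$ and $p_2=(1-\varepsilon)L/N$. By Theorem~\ref{th:main}(i), for every $p\ge p_1$ stability \eqref{eq:stability} holds asymptotically almost surely, and all maximum independent sets are stars. By Theorem~\ref{th:main}(ii), for every $p\le p_2$ stability fails asymptotically almost surely. Hence any critical probability for stability lies between $p_2$ and $p_1$. Its order is therefore determined as soon as
\begin{displaymath}
\frac{p_1}{p_2}=\frac{64}{1-\varepsilon}\cdot\frac{\Lambda}{L}\le\frac{64}{1-\varepsilon}+o(1),
\end{displaymath}
which reduces everything to the claim $\Lambda=(1+o(1))L$. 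The paper announces this as \eqref{eq:Lam}--\eqref{eq:LamL} in Section~\ref{sec:prelim}, but I would verify it directly as follows.

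First, the growth condition \eqref{eq:uslovie} forces $r^3=o(n)$ in either branch. Indeed, $\bC{r}{s}\ge r$ for $1\le s<r$, so $\bC{r}{s}r^2\ge r^3$; and $s\ge1$ gives $sr^3\ge r^3$. Consequently $\ln(n/r)\to\infty$, and since $s<r$ also $\ln(n/s)\to\infty$.

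Next I split $\Lambda=\ln\bC{n}{s}+\ln(V-N)$ and estimate the two terms separately.
\begin{itemize}
\item[(a)] For the first term, the standard bounds $(n/s)^s\le\bC{n}{s}\le(en/s)^s$ give $\ln\bC{n}{s}=s\ln\frac ns+O(s)$.
\item[(b)] For the second term, write
\begin{displaymath}
\frac NV=\prod_{i=0}^{s-1}\frac{r-i}{n-i}\le\frac rn=o(1),
\end{displaymath}
so $\ln(V-N)=\ln V+o(1)$. The same binomial bounds then give $\ln V=r\ln\frac nr+O(r)$.
\end{itemize}
Summing, $\Lambda=L+O(r+s)+o(1)$.

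The remaining error terms are negligible: since $\ln(n/r)\to\infty$, we have $O(r)=o(r\ln\frac nr)$, and likewise $O(s)=o(s\ln\frac ns)$. Hence $\Lambda=(1+o(1))L$. This gives $\Lambda/N\asymp L/N$, and the ratio $p_1/p_2\le 64/(1-\varepsilon)+o(1)$ follows.

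The only step requiring care is (b): replacing $V-N$ by $V$ inside the logarithm and making sure the $O(r+s)$ corrections are swallowed. Both are handled by the observation that $r=o(n^{1/3})$ under either branch of \eqref{eq:uslovie}.
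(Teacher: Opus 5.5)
Your proposal is correct and follows essentially the paper's route: you combine the two parts of Theorem~\ref{th:main}, which are already stated for half-intervals of $p$ by monotonicity, with $\Lambda=(1+o(1))L$, and the paper derives the latter in \eqref{eq:Lam}--\eqref{eq:LamL} from the same binomial estimates and $N/V=c/\bC ns\to0$. The only differences are cosmetic: the paper uses $V-N\ge V/2$ and the single bound $(r+s)/L\le 2/\ln\frac nr$, whereas you write $\ln(V-N)=\ln V+o(1)$ and absorb the $O(r)$ and $O(s)$ errors separately.
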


Corollary~\ref{sled:order} follows from \eqref{eq:LamL}
(Section~\ref{sec:prelim}) and the monotonicity of the assertions of
the theorem in $p$ (Remark~\ref{zam:monot}).

In all remarks of this section, $n$ is assumed to be sufficiently
large and the hypotheses of Theorem~\ref{th:main} are assumed to
hold. The remarks rely on several elementary estimates, which are
derived in Section~\ref{sec:prelim} independently of them: the
inequality \eqref{eq:cN} for $c/N$, the inequality $N\ge n-s\ge n-r$
from \eqref{eq:Nnr}, the bounds \eqref{eq:Lam}--\eqref{eq:LamV} for
$\Lambda$, and the asymptotics $\Lambda=(1+o(1))L$ from
\eqref{eq:LamL}, where $L=r\ln\frac nr+s\ln\frac ns$. References to the
statements of Sections~\ref{sec:prelim}--\ref{sec:concl} are
everywhere given with the section indicated.

\begin{remark}
\label{zam:monot}
The assertions of Theorem~\ref{th:main} are monotone in $p$: for
$p'\ge p$ the random graph $G_{p'}$ can be realized as the union of
$G_p$ with an independent addition of edges, and every set independent
in $G_{p'}$ is independent in $G_p$. Therefore, both parts are stated
at once for half-intervals of values of $p$, and in the proofs it
suffices to consider $p=p_1$ in part (i) and $p=p_2$ in part (ii).
\end{remark}

\begin{remark}
\label{zam:deterministic}
Condition \eqref{eq:uslovie} implies $r^2=o(n)$, while
$(r-s+1)(s+1)\le r^2$ (since $s+1\le r$ and $r-s+1\le r$), so that
$n>(r-s+1)(s+1)$ for large $n$, and by the Ahlswede--Khachatrian
theorem \cite{AK} we indeed have
$\alpha\bigl(G(n,r,<s)\bigr)=\bC{n-s}{r-s}$. Thus, part (i) asserts
precisely stability. Note that the proof of Theorem~\ref{th:main}
itself uses neither the Frankl--Wilson theorem nor the complete
intersection theorem \cite{AK}; the only external result on which it
relies is the complete nontrivial-intersection theorem \cite{AK2}, and
it is needed only in the second branch of condition \eqref{eq:uslovie}
(Lemma~\ref{lem:HMF} in Section~\ref{sec:small}).
\end{remark}

\begin{remark}
\label{zam:s1}
For $s=1$, condition \eqref{eq:uslovie} turns into $r^3=o(n)$ --- the
same range $r=o(n^{1/3})$ as in the
Bollob\'as--Narayanan--Raigorodskii theorem \cite{BNR}. Here
$r\ln\frac nr+\ln n=(r+1)\ln n-r\ln r$, i.e., $p_2=(1-\varepsilon)
p_c(n,r)$: part (ii) reproduces the lower bound \eqref{eq:bnr} up to
$\varepsilon$. Of course, for $s=1$ only part (ii) is meaningful: in
this case the sharp threshold \eqref{eq:bnr} also gives the upper
bound. The coincidence of the ranges $r=o(n^{1/3})$ indirectly
indicates that condition \eqref{eq:uslovie} is a natural limit of
applicability of the present method.
\end{remark}

\begin{remark}
\label{zam:compare}
Let us compare part (i) with the known results on growing parameters,
in which the sufficient conditions for stability have the form
$p\gg r^3\ln n/n$ \cite{KOR,RaiKaras} and $p\ge16sr^2\ln(n/r)/n$
\cite{PR}, i.e., are polynomial in $n$. From \eqref{eq:Lam} and the
bound $N\ge n-r\ge n/2$ (see \eqref{eq:Nnr} in
Section~\ref{sec:prelim}) it follows that
\begin{equation}
\label{eq:p1poly}
p_1=\frac{64\Lambda}{N}\le\frac{256\,r(\ln n+1)}{n}.
\end{equation}
Therefore the ratio of $p_1$ to the bound $r^3\ln n/n$ does not exceed
$256(\ln n+1)/(r^2\ln n)\le512/r^2$, and to the bound
$16sr^2\ln(n/r)/n$ --- the quantity
$16(\ln n+1)\big/\bigl(s\,r\ln(n/r)\bigr)\le48/(sr)$ for large $n$
(here $\ln(n/r)\ge\frac12\ln n$, since $r=o(\sqrt n)$). Thus,
throughout the range \eqref{eq:uslovie}, part (i) is, up to the order of magnitude, at least as strong as
the bounds \cite{KOR,RaiKaras,PR}, and for $r\to\infty$
(respectively for $sr\to\infty$, which for \cite{PR} always holds,
since there $s\to\infty$) is stronger than them by at least a factor
of $\asymp r^2$ (respectively $\asymp sr$). These comparisons rely only
on the crude bound $N\ge n/2$; in reality, for bounded $r$ and $s$ the
gap is much larger: then $N\asymp n^{r-s}$ and $\Lambda\asymp\ln n$, so
that $p_1=64\Lambda/N\asymp n^{-(r-s)}\ln n$, whereas the polynomial
bounds have order $\ln n/n$; for $r-s\ge2$ this is a gap by a factor of
$\asymp n^{r-s-1}$.

For $r-s\to\infty$ the gain becomes superpolynomial. From
\eqref{eq:uslovie} it follows that $r^3=o(n)$, i.e.,
$\frac13\ln n-\ln r\to+\infty$, in particular
$\frac13\ln n-\ln r\ge1$ for large $n$; hence, in this case,
$\ln\frac{n-s}{r}\ge\ln n-1-\ln r\ge\frac23\ln n$, and, using
$c\ge r$, the bound $\Lambda\le2r(\ln n+1)$ and inequality
\eqref{eq:cN} (Section~\ref{sec:prelim}), we obtain
\begin{align*}
\ln\frac1{p_1}&=\ln N-\ln(64\Lambda)
\ge\ln\frac Nc+\ln r-\ln\bigl(128\,r(\ln n+1)\bigr)\\
&\ge(r-s)\ln\frac{n-s}{r}-\ln\ln n-6
\ \ge\ \frac23(r-s)\ln n-\ln\ln n-6 .
\end{align*}
For example, for any fixed $s$ condition \eqref{eq:uslovie} means
$r^3=o(n)$ (for $s\ge2$ and growing $r$ the minimum in
\eqref{eq:uslovie} is attained at the second expression, since
$\bC rs\,r^2\asymp r^{s+2}/s!\ge s\,r^3$ for large $r$); for
$r=\lfloor n^{1/4}\rfloor$ the stability threshold is lowered from
polynomial to the quantity $\exp\{-\Theta(n^{1/4}\ln n)\}$. Condition
\eqref{eq:uslovie} also allows joint growth of the parameters: since
$\bC{r}{s}\le(er/s)^s$, for the first of the two expressions in
\eqref{eq:uslovie} it suffices that
$s\ln\frac{er}{s}+2\ln r\le(1-\gamma)\ln n$ for some fixed $\gamma>0$;
for example, $r=\lfloor(\ln n)^a\rfloor$, $s=\lfloor(\ln n)^b\rfloor$
are suitable for any fixed $a>b\ge0$, $b<1$, and, thanks to the second
expression, so are $r=\lfloor n^{1/5}\rfloor$, $s=\lfloor r/2\rfloor$.

All of the above concerns the range \eqref{eq:uslovie}; outside it the
results \cite{KOR,RaiKaras,PR} remain the best known, and their ranges
of applicability are not contained in \eqref{eq:uslovie}. First, for
$s\to\infty$, $s=o(r)$ the minimum in \eqref{eq:uslovie} is attained at
$sr^3$ (for large $n$ one has $2\le s\le r-3$ and $r\ge6$, i.e.,
$\bC rs>sr$; see Remark~\ref{zam:uslovie} below), i.e., our condition
is $sr^3=o(n)$, whereas the bound \cite{PR} is already meaningful for
$sr^2=o(n/\ln n)$: for example, for $r=\lfloor n^{2/5}\rfloor$,
$s=\lfloor n^{1/20}\rfloor$ we have $sr^2\asymp n^{17/20}$ and \cite{PR}
gives stability for $p\asymp n^{-3/20}\ln n$, while $sr^3\asymp n^{5/4}$,
and Theorem~\ref{th:main} is inapplicable. Second, the bound
$p\gg r^3\ln n/n$ of the papers \cite{KOR,RaiKaras} does not depend on
$s$ for $r-s\ge3$: for $r=\lfloor n^{3/10}\rfloor$,
$s=\lfloor r/2\rfloor$ they give stability for $p\gg n^{-1/10}\ln n$,
whereas $sr^3\asymp n^{6/5}$ and $\bC rs\ge2^r/(r+1)$, so that
\eqref{eq:uslovie} is violated. For \emph{fixed} $s$, on the contrary,
the condition $r^3\ln n=o(n)$ implies \eqref{eq:uslovie}, i.e., the
range of \cite{KOR,RaiKaras} lies entirely within ours; the gain here
can be large even for bounded parameters: say, for fixed $r$ and
$s=r-1$ we have $N=n-r+1$ and $\Lambda\asymp\ln n$, i.e.,
$p_1\asymp\ln n/n$, whereas in \cite{RaiKaras} for $r-s=1$ one requires
$p\gg\ln n/n^{1/3}$.
Finally, the comparison with Theorem~\ref{thOg} for small $s\bC rs$ is
treated in Remark~\ref{zam:constants}.
\end{remark}

\begin{remark}
\label{zam:constants}
The absolute constant 64 in part (i) is not optimal; it arises as
$\max\{4/\beta,2/\eta\}$, where $\beta=\eta=\frac1{16}$ are the
constants from Corollary~\ref{sled:Lstar} (Section~\ref{sec:small}),
and any improvement of them proportionally improves the bound (see
Section~\ref{sec:concl}). Comparison with Theorem~\ref{thOg}: the
latter corresponds to the value $2s\bC rs$ where we have
$64\Lambda/\ln n\le64(r+s)$ (since
$\bC ns(V-N)\le\bC ns\bC nr\le n^{s+r}$), so that for small $s\bC rs$
(for example, for $s=1$, where $s\bC rs=r$) the constant of
Theorem~\ref{thOg} is smaller than ours, while for
$s\bC rs\gtrsim32(r+s)$ (for example, for $r=10$, $s=5$:
$s\bC rs=1260$ against $32(r+s)=480$) it is larger. The advantage of
part (i) is that its constant is \emph{absolute}: the ratio of the
upper and lower bounds in Theorem~\ref{thOg} grows as
$s\bC{r}{s}/(r+s)$, whereas in Theorem~\ref{th:main} it is bounded.
\end{remark}

\begin{remark}
\label{zam:uslovie}
Condition \eqref{eq:uslovie} is the minimum of two expressions; the
second is better than the first exactly when $\bC{r}{s}>sr$, i.e., for
$2\le s\le r-3$ and $r\ge6$. Thus, for $s=2$ the first expression has
order $r^4$, the second $r^3$; for $s\asymp r/2$ the first grows
exponentially in $r$, the second as $r^4$. For $s\in\{r-2,r-1\}$ the
first expression is no worse than the second, and strictly better for
all such $(r,s)$ except $(2,1)$ and $(3,1)$, where the expressions
coincide. The first of the two conditions is ensured elementarily
(Lemma~\ref{lem:nontriv} in Section~\ref{sec:small}), the second uses
the complete nontrivial-intersection theorem \cite{AK2}
(Lemma~\ref{lem:HMF} in the same place) --- this is the only place in
the paper where an external result is needed.
\end{remark}

\begin{remark}
\label{zam:iii}
Part (ii) holds also under conditions weaker than \eqref{eq:uslovie},
namely $r\ge2$, $1\le s<r$, $r^2\ln^2n=o(n)$; this is clear from the
proof, in which condition \eqref{eq:uslovie} is not used. Note that
\eqref{eq:uslovie} implies $r^3=o(n)$, and the latter implies
$r^2\ln^2n=o(n)$.
\end{remark}

The rest of the paper is organized as follows. In
Section~\ref{sec:prelim} we introduce the notation and prove two
simple lemmas. Section~\ref{sec:small} is the main part of the paper:
there, from a single estimate (Lemma~\ref{lem:cert}) it is deduced
that every $N$-element family of small star degree contains
$\gtrsim N^2$ edges (Corollary~\ref{sled:Lstar}). In
Section~\ref{sec:upper} part (i) is proved, in Section~\ref{sec:lower}
part (ii); Section~\ref{sec:concl} contains concluding remarks. The
general scheme of the proof of part (i) follows \cite{OgRai} (which, in
turn, follows \cite{BNR}); new are Section~\ref{sec:small} and the
related choice of ranges in Section~\ref{sec:upper}, as well as the
explicit tracking of all error terms, which for fixed parameters were
constants.

\section{Notation and auxiliary results}
\label{sec:prelim}

Throughout what follows $n\to\infty$; all asymptotics refer to this
limit, all statements ``for all sufficiently large $n$'' are used
without stipulation, and all $o(1)$ that appear are uniform in all the
remaining variables (indices of summands, ranges of indices, etc.). By
$G$ we denote the graph $G(n,r,<s)$, and by $G_p$ its random subgraph.
Put
\begin{displaymath}
\delta:=\frac{2r^2}{n}.
\end{displaymath}
In Sections~\ref{sec:small}--\ref{sec:lower} we assume $r^2=o(n)$
(and in Section~\ref{sec:upper} also \eqref{eq:uslovie}); in
particular, $\delta\to0$ and $r=o(\sqrt n)$.

For $D\subseteq V(n,r)$ we denote by $e(D)$ the number of edges of the
graph $G$ inside $D$, and by
\begin{displaymath}
\deg D:=\max_{W\subset[n],\,|W|=s}\bigl|D\cap S_W\bigr|
\end{displaymath}
the \emph{star degree} of the family $D$ (following \cite{OgRai});
clearly $1\le\deg D\le\min(|D|,N)$ for nonempty $D$. For $x\in D$ we
put
\begin{displaymath}
D_x:=\bigl\{y\in D\setminus\{x\}\colon|x\cap y|\ge s\bigr\},
\qquad \nu(x):=|D_x|
\end{displaymath}
the set and the number of \emph{non-neighbors} of the vertex $x$
inside $D$. Since each pair of distinct vertices of $D$ is either
joined by an edge of the graph $G$ or not,
\begin{equation}
\label{eq:nu}
e(D)=\bC{|D|}{2}-\frac12\sum_{x\in D}\nu(x).
\end{equation}
A family $B\subseteq V(n,r)$ is called \emph{$s$-intersecting} if
$|x\cap y|\ge s$ for all $x,y\in B$, and \emph{trivial} if there
exists an $s$-set $W$ with $B\subseteq S_W$.

We shall need the standard estimates: for positive integers
$a\ge b\ge1$
\begin{equation}
\label{eq:binom}
\Bigl(\frac{a}{b}\Bigr)^b\le \bC{a}{b}\le\Bigl(\frac{ea}{b}\Bigr)^{b},
\end{equation}
as well as $1+x\le e^x$ for all $x$ and, for $0\le p\le1/2$, the
inequalities
\begin{equation}
\label{eq:pp}
(1-p)^{-1}=1+\frac{p}{1-p}\le1+2p\le e^{2p},\qquad
1-p\ge e^{-p-p^2}.
\end{equation}
We shall repeatedly need the following elementary observation: for
$x<b$ the function $x\mapsto\frac{a-x}{b-x}$ \emph{decreases} if
$a<b$ and \emph{increases} if $a>b$ (its derivative equals
$\frac{a-b}{(b-x)^2}$). It is precisely this that justifies all steps
of the form ``a fraction is estimated by its value at a different
shift'' --- in \eqref{eq:cN} and \eqref{eq:shift} below, in
Lemma~\ref{lem:Mt} and in Lemmas~\ref{lem:cert2} and~\ref{lem:HMF}; in
what follows we use it without further reference.

Using the symmetry $c=\bC{r}{s}=\bC{r}{r-s}$, we obtain
\begin{equation}
\label{eq:cN}
\frac{c}{N}=\frac{\bC{r}{r-s}}{\bC{n-s}{r-s}}
=\prod_{i=0}^{r-s-1}\frac{r-i}{n-s-i}
\le\Bigl(\frac{r}{n-s}\Bigr)^{r-s}\le\frac{r}{n-s},
\end{equation}
since $\frac{r-i}{n-s-i}\le\frac{r}{n-s}$ for $0\le i<r-s$, provided
$r\le n-s$ (the latter, for $r^2=o(n)$, holds for large $n$; it is
only under this condition that \eqref{eq:cN} is used). Note also that
for $1\le s<r<n$
\begin{equation}
\label{eq:Nnr}
N=\bC{n-s}{r-s}\ge n-s\ge n-r
\end{equation}
(since $1\le r-s\le n-s-1$), in particular $N\to\infty$ and $N\ge n/2$
for $n\ge2r$. Finally, throughout we use the elementary inequality
\begin{equation}
\label{eq:shift}
\bC{n-s-1}{r-s-1}=N\,\frac{r-s}{n-s}\le\frac{N\,r}{n},
\end{equation}
valid for $r\le n$ (indeed, $n(r-s)\le r(n-s)$ is equivalent to
$rs\le ns$).

\begin{lemma}
\label{lem:identity}
$V\,\bC{r}{s}=\bC{n}{s}\,N$, i.e., $\dfrac{V}{N}=\dfrac{\bC{n}{s}}{c}$.
\end{lemma}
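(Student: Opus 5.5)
The identity $V\,\bC{r}{s}=\bC{n}{s}\,N$ is a standard double count, and we would prove it by counting pairs. Let $P$ be the set of pairs $(W,x)$ with $W\subset[n]$, $|W|=s$, $x\in V(n,r)$, and $W\subseteq x$. Equivalently, $P$ is the set of pairs $(W,x)$ with $x\in S_W$.

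First count by $x$. Each of the $V=\bC{n}{r}$ vertices $x$ contains exactly $\bC{r}{s}=c$ subsets $W$ of size $s$. Hence $|P|=V\,c$.

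Next count by $W$. Each of the $\bC{n}{s}$ centers $W$ lies in exactly $|S_W|=\bC{n-s}{r-s}=N$ sets $x$, since $x$ is determined by choosing its remaining $r-s$ elements from $[n]\setminus W$. Hence $|P|=\bC{n}{s}\,N$.

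Equating the two counts gives $V\,\bC{r}{s}=\bC{n}{s}\,N$. Dividing by $N\,c>0$, which is positive because $s<r<n$, yields $V/N=\bC{n}{s}/c$.

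As a check, the identity can also be verified algebraically:
\begin{displaymath}
\bC{n}{r}\bC{r}{s}=\frac{n!}{s!\,(r-s)!\,(n-r)!}=\bC{n}{s}\bC{n-s}{r-s}.
\end{displaymath}
No step presents a real obstacle; the only point requiring care is that all quantities are positive, so the division is legitimate.
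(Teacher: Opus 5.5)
Your proof is correct and is the same as the paper's: both double count the pairs $(W,x)$ with $|W|=s$ and $W\subseteq x\in V(n,r)$, once by choosing $x$ first and once by choosing $W$ first. Your factorial check and your note that $Nc>0$, so the division is legitimate, are valid additions that the paper leaves implicit.
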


\begin{proof}
Both sides of the identity $\bC{n}{r}\bC{r}{s}=\bC{n}{s}\bC{n-s}{r-s}$
count the number of pairs $(x,W)$, where $x\in V(n,r)$ and
$W\subseteq x$, $|W|=s$: on the left one first chooses $x$, then $W$;
on the right the other way round.
\end{proof}

From Lemma~\ref{lem:identity} and \eqref{eq:binom} it follows that
$N/V=c\big/\bC{n}{s}\le(er/n)^s\to0$, so that for large $n$ we have
$V-N\ge V/2$ and hence
\begin{equation}
\label{eq:Lam}
s\ln\frac ns+r\ln\frac nr-1\ \le\ \Lambda\ \le\
s\ln\frac{en}{s}+r\ln\frac{en}{r}\ \le\ 2r(\ln n+1),
\end{equation}
\begin{equation}
\label{eq:LamV}
\ln V\le\Lambda+\ln2,\qquad
\ln\frac VN\le\Lambda+\ln2-\ln N,\qquad
\ln\bC ns\le\Lambda .
\end{equation}
Put $L:=r\ln\frac nr+s\ln\frac ns$. Then the right-hand side of
\eqref{eq:Lam} equals $L+r+s$, while $L\ge r\ln\frac nr\ge\frac r2\ln n$
for large $n$ (since $r=o(\sqrt n)$), so that
$(r+s)/L\le2/\ln\frac nr\to0$. Therefore
\begin{equation}
\label{eq:LamL}
\Lambda=(1+o(1))\,L,\qquad \Lambda\ge r\ln\frac nr\to\infty,
\end{equation}

Let $W\subset[n]$, $|W|=s$, and let a vertex $A\in V(n,r)$ intersect
$W$ in exactly $t\le s-1$ elements. By $M_t$ we denote the number of
vertices of the star $S_W$ joined to $A$ by an edge of the graph $G$
(it is easily seen that it depends only on $t$).

\begin{lemma}
\label{lem:Mt}
For all $0\le t\le s-1$ and sufficiently large $n$
\begin{displaymath}
(1-\delta)N\le M_t\le N.
\end{displaymath}
\end{lemma}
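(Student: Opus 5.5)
The upper bound $M_t\le N$ is immediate, since $M_t$ counts a subset of $S_W$ and $|S_W|=N$. For the lower bound I would bound the complement: the number of $x\in S_W$ with $|x\cap A|\ge s$. Write $x=W\cup y$ with $y\subseteq[n]\setminus W$ and $|y|=r-s$. Then $|x\cap A|=t+|y\cap A'|$, where $A'=A\setminus W$ and $|A'|=r-t$. So $x$ fails to be a neighbour of $A$ exactly when $|y\cap A'|\ge s-t\ge1$. In particular such a $y$ must meet $A'$.

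This leads to a crude union bound. The number of $(r-s)$-subsets $y$ of the $(n-s)$-set $[n]\setminus W$ that meet $A'$ is at most
\begin{displaymath}
|A'|\,\bC{n-s-1}{r-s-1}\le r\,\bC{n-s-1}{r-s-1}\le \frac{r\cdot Nr}{n}=\frac{r^2}{n}N,
\end{displaymath}
where the last step uses \eqref{eq:shift}. Hence
\begin{displaymath}
M_t\ge N-\frac{r^2}{n}N\ge(1-\delta)N,
\end{displaymath}
because $\delta=2r^2/n$. This already gives the claim, with a factor of $2$ to spare.

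There is no real obstacle. The only things to check are that \eqref{eq:shift} applies, which needs $r\le n$ and so holds, and the degenerate case $r-s-1=0$. In that case $y$ is a single element and the count of $y$ meeting $A'$ is at most $r-t\le r$, while $N=n-s$, so $r\le r^2(n-s)/n$ holds for large $n$. The case is therefore covered by the same formula, since \eqref{eq:shift} still reads $1\le Nr/n$, which is true. If sharper control were wanted, one could instead use the exact count $\sum_{j\ge s-t}\bC{r-t}{j}\bC{n-r-s+t}{r-s-j}$ and compare ratios via the monotonicity observation. For the stated bound, however, the union bound suffices.
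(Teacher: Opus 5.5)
Your argument is correct and is essentially the paper's proof read through the complement. The paper lists the neighbours $W\sqcup T$ with $T\subset[n]\setminus(W\cup A)$ and bounds $\bC{n-s-r+t}{r-s}/N$ from below by a product estimate and Bernoulli's inequality, while you bound the remaining sets $y$ meeting $A\setminus W$ by a union bound and \eqref{eq:shift}; this even gives the slightly sharper $M_t\ge(1-r^2/n)N$ with no largeness assumption on $n$, and your separate treatment of $r-s=1$ is unnecessary since \eqref{eq:shift} covers it directly.
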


\begin{proof}
The inequality $M_t\le|S_W|=N$ is obvious. Further, every vertex of
the form $w=W\sqcup T$, where $T\subset[n]\setminus(W\cup A)$,
$|T|=r-s$, belongs to $S_W$ and satisfies $|w\cap A|=|W\cap A|=t<s$,
i.e., is joined to $A$ by an edge. The number of such vertices equals
$\bC{n-s-(r-t)}{r-s}$, so that
\begin{displaymath}
\frac{M_t}{N}\ge\frac{\bC{n-s-r+t}{r-s}}{\bC{n-s}{r-s}}
=\prod_{i=0}^{r-s-1}\frac{n-s-r+t-i}{n-s-i}
\ge\Bigl(1-\frac{r-t}{n-r}\Bigr)^{r-s}
\ge1-\frac{r(r-s)}{n-r}\ge1-\frac{2r^2}{n}
\end{displaymath}
for large $n$ (in the penultimate step Bernoulli's inequality was
used).
\end{proof}

Finally, we adopt the following convention: if a set $D$ is independent in
$G_p$, then none of the $e(D)$ edges of the graph $G$ inside $D$ has
survived, and the probability of this event equals
$(1-p)^{e(D)}\le e^{-p\,e(D)}$. We sum estimates of this kind over all
sets of the type under consideration, obtaining a bound on the
expectation of their number, and use Markov's inequality
$\PP(X>0)\le\EE(X)$ for integer-valued nonnegative $X$; this is done
repeatedly without further stipulation.

\section{Families of small star degree}
\label{sec:small}

In this section we prove the main new estimate: a family of
cardinality $N$ that is not concentrated near any single star contains
at least $N^2/16$ edges. All statements of the section are
deterministic.

Let us introduce two quantities depending only on $n$, $r$, $s$:
\begin{align}
\label{eq:Phi}
\Phi&:=\max_{\substack{x,x'\in V(n,r)\\ |x\cap x'|<s}}
\bigl|\{y\in V(n,r)\colon|y\cap x|\ge s,\ |y\cap x'|\ge s\}\bigr|,\\
\label{eq:Psi}
\Psi&:=\max\bigl\{|B|\colon B\subseteq V(n,r)\ \text{is a nontrivial
$s$-intersecting family}\bigr\}
\end{align}
(if there are no pairs $x,x'$ with $|x\cap x'|<s$, we set $\Phi:=0$;
if there are no nontrivial $s$-intersecting families, we set
$\Psi:=1$). The quantity $\Phi$ is the maximum number of ``common
non-neighbors'' of two vertices joined by an edge, and $\Psi$ is the
maximum cardinality of an independent family not lying entirely in any
star.

\begin{remark}
\label{zam:Phi}
The quantity $\Phi$ --- the maximum over edges of the graph $G$ of the
number of vertices adjacent to neither endpoint of the edge --- is not
new: for $s=1$ and $p=\frac12$ it (under the notation $\gamma_n$) was
considered by A.\,M.~Raigorodskii \cite{Rai17}, see also Theorem~17 in
Section~3.3.10 of the survey \cite{RaiSin}; related questions on the
independence numbers of random subgraphs were also considered in
\cite{Rai26}. The technique itself goes back to a lemma of
M.\,M.~Pyaderkin to the effect that two adjacent vertices have few
common non-neighbors \cite{BGPR1,BGPR2}. The difference of our
application is that $\Phi$ is compared not with $\alpha(G)$, but with
the cardinality of an arbitrary family of small star degree
(Lemma~\ref{lem:Lstar}), and that the bounds for $\Phi$ are derived
from the certificate lemma (Lemmas~\ref{lem:pair} and~\ref{lem:cert2})
and are therefore applicable for growing $r$ and $s$.
\end{remark}

\subsection{The main lemma}

\begin{lemma}
\label{lem:Lstar}
Let $1\le s<r$, $D\subseteq V(n,r)$, $M:=|D|$, and let
\begin{displaymath}
M\ge16,\qquad \deg D\le\frac{M}{16},\qquad
\Phi<\frac M2,\qquad \Psi\le\frac M{16}.
\end{displaymath}
Then $e(D)\ge\dfrac{M^2}{16}$.
\end{lemma}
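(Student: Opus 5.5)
The plan is a Pyaderkin-type argument: argue by contradiction, pass to the vertices of $D$ with many non-neighbors, and show these form an $s$-intersecting family that is too large to be nontrivial yet too large to sit inside one star. The main tool is identity~\eqref{eq:nu}. Assume, contrary to the claim, that $e(D)<M^2/16$. Writing $\bar\nu(x):=M-1-\nu(x)$ for the number of $G$-neighbors of $x$ inside $D$, identity~\eqref{eq:nu} gives
\begin{displaymath}
\sum_{x\in D}\bar\nu(x)=2e(D)<\frac{M^2}{8}.
\end{displaymath}

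First I will apply Markov's inequality to this sum. The number of $x\in D$ with $\bar\nu(x)>M/4$ is less than $(M^2/8)/(M/4)=M/2$. Therefore the set
\begin{displaymath}
A:=\Bigl\{x\in D\colon \nu(x)\ge M-1-\frac M4\Bigr\}
\end{displaymath}
satisfies $|A|>M/2$.

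The key step is to show that $A$ is $s$-intersecting, using the definition~\eqref{eq:Phi} of $\Phi$. Suppose $x,x'\in A$ satisfy $|x\cap x'|<s$, i.e.\ they are joined by an edge of $G$. Then $D_x$ and $D_{x'}$ both lie in $D\setminus\{x,x'\}$, a set of size $M-2$ (indeed $x'\notin D_x$ and $x\notin D_{x'}$). Hence
\begin{displaymath}
|D_x\cap D_{x'}|\ge \nu(x)+\nu(x')-(M-2)\ge 2\Bigl(\frac{3M}{4}-1\Bigr)-M+2=\frac M2 .
\end{displaymath}
Every $y\in D_x\cap D_{x'}$ satisfies $|y\cap x|\ge s$ and $|y\cap x'|\ge s$. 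So the pair $x,x'$ has at least $M/2$ common non-neighbors, which gives $\Phi\ge M/2$ and contradicts the hypothesis $\Phi<M/2$. Consequently $A$ is an $s$-intersecting family.

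Finally, since $|A|>M/2>M/16\ge\Psi$, the family $A$ cannot be nontrivial by the definition~\eqref{eq:Psi}. So $A\subseteq S_W$ for some $s$-set $W$. This yields $\deg D\ge|D\cap S_W|\ge|A|>M/2>M/16$, contradicting $\deg D\le M/16$. Hence $e(D)\ge M^2/16$.

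The only delicate points are the bookkeeping in the common-non-neighbor count (excluding $x,x'$ themselves, and keeping the strict versus non-strict inequalities consistent) and the edge case $\Phi:=0$ or $\Psi:=1$ by convention. The latter is harmless because $M\ge16$ gives $M/16\ge1$. The argument in fact has slack: the constants could be improved, but $1/16$ is all that is required.
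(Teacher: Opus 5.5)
Your proof is correct and is essentially the paper's argument: the vertices of $D$ with many non-neighbors form an $s$-intersecting family (via $\Phi<M/2$), and this family is then too large to be nontrivial (via $\Psi$) and too large to lie in one star (via $\deg D$). The only difference is bookkeeping. You argue by contradiction, using a Markov count on $\sum_x(M-1-\nu(x))=2e(D)$ to get $|A|>M/2$, where the $M-2$ refinement lets the threshold $\frac34M-1$ suffice. The paper instead bounds the bad set by $M/16$ directly and then sums $\nu(x)$ via \eqref{eq:nu}.
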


\begin{proof}
Call a vertex $x\in D$ \emph{bad} if $\nu(x)\ge\frac34M$, and denote
by $B$ the set of bad vertices.

\emph{Step 1: $B$ is an $s$-intersecting family.} Let
$x,x'\in B$, $x\ne x'$. Since $D_x,D_{x'}\subseteq D$ and $|D|=M$,
\begin{displaymath}
|D_x\cap D_{x'}|\ \ge\ \nu(x)+\nu(x')-M\ \ge\
\frac34M+\frac34M-M=\frac M2 .
\end{displaymath}
If we had $|x\cap x'|<s$, then every vertex $y\in D_x\cap D_{x'}$
would satisfy $|y\cap x|\ge s$ and $|y\cap x'|\ge s$, i.e., would be
counted in \eqref{eq:Phi}; then $|D_x\cap D_{x'}|\le\Phi<M/2$ --- a
contradiction. Hence $|x\cap x'|\ge s$ for all distinct
$x,x'\in B$.

\emph{Step 2: $|B|\le\frac M{16}$.} For $|B|\le1$ this follows from
$M\ge16$. Let $|B|\ge2$. If the family $B$ is trivial, i.e.,
$B\subseteq S_W$ for some $s$-set $W$, then, since $B\subseteq D$,
\begin{displaymath}
|B|\le|D\cap S_W|\le\deg D\le\frac M{16}.
\end{displaymath}
If, however, $B$ is nontrivial, then by Step~1 and the definition
\eqref{eq:Psi} we have $|B|\le\Psi\le\frac M{16}$.

\emph{Step 3: counting.} For $x\notin B$ we have $\nu(x)<\frac34M$,
and always $\nu(x)\le M-1$. Therefore by Step~2
\begin{displaymath}
\sum_{x\in D}\nu(x)\ \le\ |B|\,(M-1)+\bigl(M-|B|\bigr)\frac34M
\ \le\ \frac{M^2}{16}+\frac34M^2=\frac{13}{16}M^2 ,
\end{displaymath}
and from \eqref{eq:nu}
\begin{displaymath}
e(D)\ \ge\ \frac{M(M-1)}{2}-\frac{13}{32}M^2
=\frac{3}{32}M^2-\frac M2\ \ge\ \frac{M^2}{16},
\end{displaymath}
where the last inequality is equivalent to $M^2/32\ge M/2$, i.e.,
$M\ge16$.
\end{proof}

It remains to estimate $\Phi$ and $\Psi$. We note at once that the
quantity $\Psi$ is known exactly: for $s=1$ this is the classical
Hilton--Milner theorem \cite{HM}, for fixed $r$ and $s$ and
sufficiently large $n$ it is a result of Frankl \cite{Frankl}, and in
the whole range $n>(s+1)(r-s+1)$ it is the Ahlswede--Khachatrian
complete nontrivial-intersection theorem \cite{AK2}. It is precisely
the latter that yields Lemma~\ref{lem:HMF} and the second branch of
condition \eqref{eq:uslovie}. However, we first derive cruder bounds
for both quantities (Lemmas~\ref{lem:cert}--\ref{lem:nontriv}): they
suffice for the first branch of \eqref{eq:uslovie}, and this branch
thus remains completely self-contained. Both the crude and the refined
bounds use one and the same elementary observation.

\begin{lemma}[Certificate lemma]
\label{lem:cert}
Let $T\subset[n]$, $|T|=s$, and $z\in V(n,r)$, where
$T\not\subseteq z$. Then
\begin{displaymath}
\bigl|\{y\in V(n,r)\colon T\subseteq y,\ |y\cap z|\ge s\}\bigr|
\ \le\ r\,\bC{n-s-1}{r-s-1}.
\end{displaymath}
\end{lemma}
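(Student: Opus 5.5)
Since $T\not\subseteq z$, there is an element $a\in T\setminus z$. This single "witness" element is the certificate that drives the whole count.

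Take any $y$ in the set being counted, so $T\subseteq y$ and $|y\cap z|\ge s$. Because $a\notin z$, we have $y\cap z\subseteq y\setminus\{a\}$. The key observation is that $y\cap z$ cannot lie entirely inside $T$. Indeed, $|T\cap z|\le|T\setminus\{a\}|=s-1$, while $|y\cap z|\ge s$. Hence there is some $b\in(y\cap z)\setminus T$, and so $y\supseteq T\cup\{b\}$ with $b\in z\setminus T$.

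We therefore assign to each such $y$ a certificate $b\in z\setminus T$ with $T\cup\{b\}\subseteq y$. The union bound then gives
\begin{displaymath}
\bigl|\{y\colon T\subseteq y,\ |y\cap z|\ge s\}\bigr|
\le\sum_{b\in z\setminus T}\bigl|\{y\in V(n,r)\colon T\cup\{b\}\subseteq y\}\bigr|
=|z\setminus T|\,\bC{n-s-1}{r-s-1}.
\end{displaymath}
Since $|z\setminus T|\le|z|=r$, this yields the bound $r\,\bC{n-s-1}{r-s-1}$. The case $r-s-1\ge0$ always holds because $s<r$, so the binomial coefficient makes sense.

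There is no real obstacle. The only point needing care is the pigeonhole step showing that $y\cap z$ contains an element outside $T$. That step uses exactly the hypothesis $T\not\subseteq z$, via $|T\cap z|\le s-1$. Multiplicities in the union bound are harmless, since we only need an upper bound.
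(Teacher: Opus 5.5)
Your proof is correct and matches the paper's argument: $T\not\subseteq z$ gives $|T\cap z|\le s-1$, so $y\cap z$ must contain some $b\notin T$, hence $y\supseteq T\cup\{b\}$ with $b\in z$. A union bound over the at most $r$ choices of $b$ then gives the claim. Your intermediate bound $|z\setminus T|\,\bC{n-s-1}{r-s-1}$ is marginally sharper than the paper's, but this makes no difference to the result.
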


\begin{proof}
Since $T\not\subseteq z$, we have $|T\cap z|\le s-1$. Let $y$ be
counted on the left. If we had $y\cap z\subseteq T$, then
$|y\cap z|\le|T\cap z|\le s-1<s$ --- a contradiction. Hence there
exists $u\in(y\cap z)\setminus T$, and then $y\supseteq T\cup\{u\}$,
where $u\in z\setminus T$, i.e., at most $|z|=r$ possibilities. The
number of vertices containing a fixed $(s+1)$-set equals
$\bC{n-s-1}{r-s-1}$.
\end{proof}

Put
\begin{equation}
\label{eq:kappa}
\varkappa:=c\,r\,\bC{n-s-1}{r-s-1}
\ \overset{\eqref{eq:shift}}{\le}\ \frac{c\,r^2}{n}\,N .
\end{equation}

\begin{lemma}
\label{lem:pair}
$\Phi\le\varkappa$.
\end{lemma}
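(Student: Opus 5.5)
Fix $x,x'\in V(n,r)$ with $|x\cap x'|<s$; if no such pair exists, then $\Phi=0$ and there is nothing to prove. We must bound the number of $y$ with $|y\cap x|\ge s$ and $|y\cap x'|\ge s$. The plan is to cover this set by $c=\bC{r}{s}$ subfamilies indexed by $s$-subsets $T\subseteq x$, and then apply the certificate lemma (Lemma~\ref{lem:cert}) to each subfamily with $z=x'$.

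\emph{Step 1: the covering.} Every $y$ with $|y\cap x|\ge s$ contains some $s$-set $T\subseteq y\cap x$. Hence $y\in S_T$ for at least one of the $c$ sets $T\subseteq x$, $|T|=s$. Consequently the set counted in \eqref{eq:Phi} is contained in
\begin{displaymath}
\bigcup_{T\subseteq x,\ |T|=s}\bigl\{y\in V(n,r)\colon T\subseteq y,\ |y\cap x'|\ge s\bigr\}.
\end{displaymath}

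\emph{Step 2: applying the certificate lemma.} For each such $T$ we have $T\not\subseteq x'$. Indeed, otherwise $T\subseteq x\cap x'$, which would give $|x\cap x'|\ge s$, contradicting the choice of the pair. So Lemma~\ref{lem:cert} applies with $z=x'$, and each set in the union has at most $r\,\bC{n-s-1}{r-s-1}$ elements.

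\emph{Step 3: summing.} The union bound over the $c$ choices of $T$ gives at most $c\,r\,\bC{n-s-1}{r-s-1}=\varkappa$ vertices. Taking the maximum over edges $\{x,x'\}$ yields $\Phi\le\varkappa$.

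There is no real obstacle here. The only point needing care is Step~2, namely checking the hypothesis $T\not\subseteq x'$, and this follows directly from $|x\cap x'|<s$. The overcounting in the union bound is harmless, since only an upper bound is needed.
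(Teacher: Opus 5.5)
Your proposal is correct and follows the paper's proof essentially verbatim: cover the common non-neighbours by the $c$ subfamilies indexed by $s$-subsets $T\subseteq x$, observe $T\not\subseteq x'$ because $|x\cap x'|<s$, and apply Lemma~\ref{lem:cert} with $z=x'$ before summing over $T$. Your explicit treatment of the degenerate case $\Phi=0$ is a harmless addition.
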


\begin{proof}
Let $|x\cap x'|<s$ and let a vertex $y$ be counted in \eqref{eq:Phi}.
From $|y\cap x|\ge s$ it follows that $y$ contains at least one
$s$-subset $T\subseteq x$; there are at most $\bC rs=c$ such $T$.
Moreover $T\not\subseteq x'$: otherwise $T\subseteq x\cap x'$ and
$|x\cap x'|\ge s$. It remains to apply Lemma~\ref{lem:cert} with
$z:=x'$ and sum over $T$.
\end{proof}

\begin{lemma}
\label{lem:nontriv}
$\Psi\le1+\varkappa$.
\end{lemma}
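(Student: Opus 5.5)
Let $B$ be a nontrivial $s$-intersecting family; we need $|B|\le 1+\varkappa$. The idea is to fix one member $x\in B$ and bound the rest of $B$ by the certificate lemma.

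Every $y\in B$ satisfies $|y\cap x|\ge s$. Hence $y$ contains some $s$-subset $T\subseteq x$, and there are at most $c=\bC rs$ choices of $T$.

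Nontriviality gives, for each such $T$, a member $z_T\in B$ with $T\not\subseteq z_T$; otherwise $B\subseteq S_T$, which is excluded. (Note $z_T\ne x$, since $T\subseteq x$.) Every $y\in B$ with $T\subseteq y$ also lies in $B$, so $|y\cap z_T|\ge s$. By Lemma~\ref{lem:cert} applied with $z:=z_T$, the number of such $y$ is at most $r\,\bC{n-s-1}{r-s-1}$.

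Summing over the at most $c$ subsets $T$ gives $|B|\le c\,r\,\bC{n-s-1}{r-s-1}=\varkappa$. This is in fact stronger than needed: $x$ itself is covered by the sum, because $x$ contains $T$ and meets $z_T$ in at least $s$ elements, so the summand $1$ is spare. It covers the convention $\Psi:=1$, where no nontrivial family exists; in that case $1\le 1+\varkappa$ trivially.

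The argument has no real obstacle. The one point to state carefully is that nontriviality is used separately for each $T\subseteq x$, to produce a witness $z_T$ with $T\not\subseteq z_T$, rather than once for the whole family.
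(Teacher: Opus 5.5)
Your proof is correct and follows the paper's argument: fix a member $x\in B$, cover $B$ by the at most $c$ subfamilies $\{y\in B\colon T\subseteq y\}$ with $T\subseteq x$, $|T|=s$, and bound each one by Lemma~\ref{lem:cert} using a nontriviality witness $z_T$. The only difference is that the paper adds $x_0$ separately to get $1+\varkappa$, while you correctly observe that $x$ is already counted (since $|x\cap z_T|\ge s$), which gives the slightly sharper bound $|B|\le\varkappa$ for every nontrivial $B$.
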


\begin{proof}
Let $B$ be a nontrivial $s$-intersecting family; we may assume
$|B|\ge2$. Fix $x_0\in B$. Each $y\in B\setminus\{x_0\}$ satisfies
$|y\cap x_0|\ge s$, i.e., contains at least one $s$-subset
$T\subseteq x_0$ (at most $c$ choices). Fix such a $T$. By
nontriviality there exists $z_T\in B$ with $T\not\subseteq z_T$. If
$y\in B$ and $T\subseteq y$, then $y\ne z_T$ and hence
$|y\cap z_T|\ge s$; by Lemma~\ref{lem:cert} (with $z:=z_T$) there are
at most $r\bC{n-s-1}{r-s-1}$ such $y$. Summing over $T\subseteq x_0$
and adding $x_0$ itself, we obtain $|B|\le1+c\,r\,\bC{n-s-1}{r-s-1}$.
\end{proof}

\subsection{Refined estimates}

Lemmas~\ref{lem:pair} and~\ref{lem:nontriv} give $\Phi,\Psi\lesssim
c\,r^2N/n$ and therefore lead to the condition $cr^2=o(n)$. Both
bounds can be improved by replacing the factor $c$ with factors
growing polynomially in $r$; this is what yields the second expression
in \eqref{eq:uslovie}. Here and in what follows $\bC ab:=0$ for $b>a$
or $b<0$.

\begin{lemma}
\label{lem:cert2}
Let $n\ge15\,s\,r^3$. Then $\Phi\le\dfrac{2e^2\,r^3}{n}\,N$.
\end{lemma}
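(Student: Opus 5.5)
The plan is a direct count: parametrize each $y$ by how it meets $x$ and $x'$, bound each class against $N$, and sum a geometric-type series. Fix $x,x'$ with $t:=|x\cap x'|\le s-1$, and let $y$ be counted in \eqref{eq:Phi}. Put
\begin{displaymath}
a:=|y\cap x\cap x'|,\qquad b:=|y\cap(x\setminus x')|,\qquad b':=|y\cap(x'\setminus x)|.
\end{displaymath}
Then $a\le t$, $a+b\ge s$ and $a+b'\ge s$. Since $a\le s-1$, this forces $b,b'\ge s-a\ge1$. The number of $y$ with given $(a,b,b')$ is
\begin{displaymath}
\bC{t}{a}\bC{r-t}{b}\bC{r-t}{b'}\bC{n-2r+t}{r-a-b-b'}.
\end{displaymath}
I will bound the ratio of this quantity to $N=\bC{n-s}{r-s}$ and sum over all admissible triples.

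For the last binomial, write $m:=a+b+b'-s\ge s-a\ge1$ for the excess of $|y\cap(x\cup x')|$ over $s$. By the shifting observation of Section~\ref{sec:prelim} (the same manipulation as in \eqref{eq:cN} and \eqref{eq:shift}),
\begin{displaymath}
\frac{\bC{n-2r+t}{r-s-m}}{\bC{n-s}{r-s}}\le\Bigl(\frac{r}{n-s}\Bigr)^{m}\le\Bigl(\frac{(1+o(1))\,r}{n}\Bigr)^{m}.
\end{displaymath}
Next write $a=t-i$ with $i\ge0$, $k:=s-t\ge1$, $b=k+i+\beta$ and $b'=k+i+\beta'$ with $\beta,\beta'\ge0$. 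Then $m=k+i+\beta+\beta'$. The remaining factors are estimated as
\begin{displaymath}
\bC{t}{t-i}\le\frac{s^i}{i!},\qquad \bC{r-t}{b}\le\frac{r^{b}}{b!}\le\frac{r^{k+i}}{(k+i)!}\cdot\frac{r^{\beta}}{\beta!},
\end{displaymath}
and similarly for $b'$. Collecting the powers, each class contributes at most
\begin{displaymath}
\frac{s^i}{i!}\,\frac{r^{2(k+i)}}{((k+i)!)^2}\Bigl(\frac rn\Bigr)^{k+i}\frac{(r^2/n)^{\beta+\beta'}}{\beta!\,\beta'!}(1+o(1))
\;\le\;\Bigl(\frac{r^3}{n}\Bigr)^{k}\Bigl(\frac{sr^3}{n}\Bigr)^{i}\frac{(r^2/n)^{\beta+\beta'}}{\beta!\,\beta'!}(1+o(1))
\end{displaymath}
times $N$. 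I will double-check the middle estimate $\bC{r-t}{b}\le \frac{r^{k+i}}{(k+i)!}\frac{r^\beta}{\beta!}$; it follows from $b!\ge(k+i)!\,\beta!$.

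Summing over $\beta,\beta'\ge0$ gives a factor at most $e^{2r^2/n}\le e^2$; this is presumably where $e^2$ in the statement comes from, and a crude bound suffices. Summing over $i\ge0$ uses $sr^3/n\le1/15$, which gives a factor at most $15/14$. Summing over $k\ge1$ uses $r^3/n\le1/15$, which gives at most $\frac{r^3}{n}\cdot\frac{15}{14}$. The product $e^2(15/14)^2(1+o(1))$ is below $2e^2$, which yields the claimed bound $\Phi\le 2e^2r^3N/n$.

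The main obstacle is bookkeeping rather than any idea. The delicate step is the ratio of the last binomial to $N$: the lower index drops by $m$ while the upper index drops by $2r-t-s$, so one must verify that each factor is at most $r/(n-s)$ and that the upper drop does not ruin this (it only helps). One must also make sure the $s^i$ from $\bC{t}{t-i}$ is paired with a gain $(r^3/n)^i$ and not just $(r/n)^i$. That pairing is exactly why the hypothesis reads $n\ge15sr^3$, and I will track it carefully.
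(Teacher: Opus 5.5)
Your proof is correct, but it is organized differently from the paper's.

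\textbf{The paper's route.} The paper does not partition the counted $y$ exactly. With $j:=s-|y\cap x\cap x'|$, it notes that each such $y$ contains a ``witness'' $(s+j)$-set $A\sqcup P\sqcup P'$, where $A=y\cap x\cap x'$, $P\subseteq x\setminus x'$, $P'\subseteq x'\setminus x$ and $|P|=|P'|=j$. It bounds the number of witnesses by $\bC{s-1}{j-1}\bigl(\bC{r}{j}\bigr)^2$ and the number of supersets of a witness by $\bC{n-s-j}{r-s-j}\le(r/n)^jN$. It then sums the single series $\frac Ns\sum_{j\ge1}q^j$ with $q=se^2r^3/n\le\frac12$.

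\textbf{Comparison.} The union bound is shorter: it has one index instead of your three, because the surplus elements of $y$ in $x\setminus x'$ and $x'\setminus x$ (your $\beta,\beta'$) are simply absorbed into the superset count. It is also where the factor $e^2$ really comes from, namely the squared estimate $\bC{r}{j}\le(er/j)^j$; this is why the hypothesis needs $15\ge2e^2$. Your exact count by the profile $(a,b,b')$ costs more bookkeeping. In exchange, because you use $\bC{r-t}{b}\le r^b/b!$, you lose no factors of $e$. The $\beta,\beta'$-sums contribute only $e^{2r^2/n}\le e^{1/15}$, not $e^2$ as you guessed. Also, $k=s-t$ is fixed by the pair $x,x'$, so you need not sum over $k$ at all. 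Altogether your argument actually yields $\Phi\le\frac{15}{14}e^{1/15}\,\frac{r^3}{n}N<1.2\,\frac{r^3}{n}N$, well below the stated $2e^2\frac{r^3}{n}N$.

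\textbf{One step to tidy.} The lemma is non-asymptotic, so a factor $(1+o(1))$ has no meaning in it. Moreover, in your argument that factor enters raised to the power $m$, which can be as large as $r-s$. Replace the bound $\frac{r}{n-s}$ by the exact inequality $\frac{r-s-\ell}{n-s-\ell}\le\frac{r-s}{n-s}\le\frac rn$, which holds because $rs\le ns$. This gives the ratio bound $(r/n)^m$ with no error term, and the rest of your computation goes through verbatim.
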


\begin{proof}
Let $|x\cap x'|<s$, $m:=|x\cap x'|\le s-1$, and let a vertex $y$ be
counted in \eqref{eq:Phi}. Put $a:=|y\cap x\cap x'|\le m$ and
$j:=s-a\ge s-m\ge1$. From $|y\cap x|\ge s$ and
$y\cap x=(y\cap x\cap x')\sqcup\bigl(y\cap(x\setminus x')\bigr)$
it follows that $|y\cap(x\setminus x')|\ge j$; similarly
$|y\cap(x'\setminus x)|\ge j$. Hence $y$ contains a set
$A\sqcup P\sqcup P'$, where $A=y\cap x\cap x'$ ($|A|=s-j$),
$P\subseteq x\setminus x'$, $P'\subseteq x'\setminus x$,
$|P|=|P'|=j$; its cardinality equals $s+j$. The number of such sets
for fixed $j$ does not exceed
$\bC{m}{s-j}\bigl(\bC{r}{j}\bigr)^2
\le\bC{s-1}{j-1}\bigl(\bC{r}{j}\bigr)^2$, and the number of vertices
containing a fixed $(s+j)$-set equals $\bC{n-s-j}{r-s-j}$. Further,
\begin{displaymath}
\frac{\bC{n-s-j}{r-s-j}}{\bC{n-s}{r-s}}
=\prod_{i=0}^{j-1}\frac{r-s-i}{n-s-i}
\le\Bigl(\frac{r-s}{n-s}\Bigr)^{j}
\le\Bigl(\frac{r}{n}\Bigr)^{j},
\end{displaymath}
while $\bC{s-1}{j-1}\le s^{j-1}$ and $\bC{r}{j}\le(er/j)^j$ by
\eqref{eq:binom}. Therefore
\begin{displaymath}
\Phi\le\sum_{j\ge1}s^{j-1}\Bigl(\frac{er}{j}\Bigr)^{2j}
N\Bigl(\frac{r}{n}\Bigr)^{j}
=\frac Ns\sum_{j\ge1}\Bigl(\frac{s\,e^2r^3}{j^2\,n}\Bigr)^{j}
\le\frac Ns\sum_{j\ge1}q^{j},\qquad
q:=\frac{s\,e^2r^3}{n}.
\end{displaymath}
For $n\ge15sr^3\ge2e^2sr^3$ we have $q\le1/2$, so that
$\Phi\le\frac Ns\cdot2q=\frac{2e^2r^3}{n}N$.
\end{proof}

\begin{lemma}
\label{lem:HMF}
Let $1\le s<r$ and $n\ge2r^2$. Then $\Psi\le\delta N$, where
$\delta=\dfrac{2r^2}{n}$ is the quantity from Section~\ref{sec:prelim}.
\end{lemma}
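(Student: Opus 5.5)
We invoke the Ahlswede--Khachatrian complete nontrivial-intersection theorem \cite{AK2}. Since $n\ge 2r^2>(s+1)(r-s+1)$, it says that a nontrivial $s$-intersecting family of $r$-sets of maximum size is isomorphic to one of two families:
\[
\mathcal H_1=\bigl\{y\colon W\subseteq y,\ y\cap U\neq\varnothing\bigr\}\cup\bigl\{y\colon |y\cap W|=s-1,\ U\subseteq y\bigr\},
\]
with $|W|=s$ and $|U|=r-s+1$ disjoint from $W$ (the Hilton--Milner-type family), and
\[
\mathcal H_2=\bigl\{y\colon |y\cap S|\ge s+1\bigr\},\qquad |S|=s+2.
\]
Hence $\Psi\le\max\{|\mathcal H_1|,|\mathcal H_2|\}$, and it suffices to show that each of these two cardinalities is at most $\delta N=2r^2N/n$.

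For $\mathcal H_1$, split into the two parts. The first part consists of $r$-sets containing $W$ and meeting $U$. By the union bound over $u\in U$ it has at most $(r-s+1)\bC{n-s-1}{r-s-1}$ elements. The second part consists of sets containing $U$ together with an $(s-1)$-subset of $W$. It has at most $s\,\bC{n-r-1}{r-s}$ elements, because such a set contains $r$ prescribed points and needs $r-s$ further points outside $W\cup U$. By \eqref{eq:shift}, the first part is at most $(r-s+1)\,rN/n\le r^2N/n$. For the second part we use $\bC{n-r-1}{r-s}\le \bC{n-s-1}{r-s}\le N$, which gives only the bound $sN$; this is too crude. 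The right comparison is
\[
\frac{\bC{n-r-1}{r-s}}{\bC{n-s}{r-s}}\le \frac{\bC{n-s-1}{r-s-1}}{\bC{n-s}{r-s}}\cdot\frac{\bC{n-r-1}{r-s}}{\bC{n-s-1}{r-s-1}} .
\]
Alternatively, every set in the second part contains an $(s+1)$-set of the form $W'\cup\{u\}$ with $W'\subset W$, $|W'|=s-1$, and $u\in U$ (in fact it contains $U$), so it may be cheaper to count such sets as $s\cdot(\text{sets containing a fixed }r\text{-set } U\cup W')$. The count $s\bC{n-r-1}{r-s}$ is of order $N\cdot s\,(r/n)^{s-1}$ roughly, which is not obviously small when $s=1$. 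For $s=1$ the second part is the single set $U$, and for general $s$ it is $s\bC{n-r-1}{r-s}$. We need a factor $r^2/n$ here. This is exactly the standard fact that the Hilton--Milner family is a $(1+o(1))$ fraction of $(r-s+1)\bC{n-s-1}{r-s-1}$. We verify it by noting that $\bC{n-r-1}{r-s}/\bC{n-s-1}{r-s-1}=\prod$-ratios bounded by $(r/n)^{r-s}\cdot n/(r-s)$, which needs a careful but elementary shift estimate using the monotonicity observation of Section~\ref{sec:prelim}. This gives at most $r^2N/n$ for $n\ge2r^2$, and the total for $\mathcal H_1$ is at most $2r^2N/n$.

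For $\mathcal H_2$, each member contains an $(s+1)$-subset of $S$, of which there are $s+2$. Hence
\[
|\mathcal H_2|\le (s+2)\bC{n-s-1}{r-s-1}\le (s+2)\frac{rN}{n}\le\frac{2r^2N}{n},
\]
using $s+2\le r+1\le 2r$.

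The main obstacle is the second part of $\mathcal H_1$, namely obtaining the bound $s\bC{n-r-1}{r-s}\le r^2N/n$ uniformly in $s$, including $s=r-1$, where $\bC{n-r-1}{1}$ is compared with $N=n-r+1$. We first check the extreme cases $s=r-1$ and $s=1$ directly. If the bound fails, we use instead the form of the Ahlswede--Khachatrian extremal family in which the second part is replaced by the condition $|y\cap(W\cup U)|\ge s+1$ intersected appropriately. Those sets contain an $(s+1)$-subset of $W\cup U$ that meets $U$, and the count is at most $\bC{r+1}{s+1}$-type terms times $\bC{n-s-1}{r-s-1}$, which we would then bound again via \eqref{eq:shift}.
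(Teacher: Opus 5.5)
\textbf{The proof has a genuine gap: the second part of $\mathcal H_1$ is miscounted.}

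Several pieces are correct. The reduction to the Ahlswede--Khachatrian theorem is sound. So is the bound $|\mathcal H_2|\le(s+2)\bC{n-s-1}{r-s-1}\le 2r^2N/n$. So is the union bound $(r-s+1)\bC{n-s-1}{r-s-1}\le r^2N/n$ for the first part of $\mathcal H_1$. That union bound over $u\in U$ is even a slightly simpler route than the paper's, which counts the first part exactly as $N-\bC{n-r-1}{r-s}$ and then applies Bernoulli's inequality.

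The miscount is as follows. A set $y$ with $U\subseteq y$ and $|y\cap W|=s-1$ already contains $(r-s+1)+(s-1)=r$ prescribed points. It therefore needs no further points, and $y=U\cup(W\setminus\{i\})$ for some $i\in W$. So this part has exactly $s$ members; it is the paper's $\{[r+1]\setminus\{i\}\colon i\in[s]\}$, not $s\bC{n-r-1}{r-s}$. Your own remark that for $s=1$ the part is the single set $U$ already contradicts your general formula.

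Because of this, the ``main obstacle'' you identify does not exist, and the repairs you sketch would not work:
\begin{itemize}
\item With your count the lemma would actually be false. For $s=r-1$ it gives $(r-1)(n-r-1)$, whereas $\delta N=2r^2(n-r+1)/n<2r^2$. Your claim that the count is of order $sN(r/n)^{s-1}$ is also wrong, since $\bC{n-r-1}{r-s}$ is comparable to $N$, not to $N(r/n)^{s-1}$.
\item The estimate $\bC{n-r-1}{r-s}/\bC{n-s-1}{r-s-1}\le(r/n)^{r-s}\,n/(r-s)$ is false. For $s=r-1$ the left side is $n-r-1$ and the right side is $r$.
\item You cannot fall back on another ``form'' of the extremal family. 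The theorem names the two families, and those are what must be bounded.
\end{itemize}

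The fix is one line. The second part has $s\le r$ members. Since $r\ge2$, and $N\ge n-r\ge n/2$ by \eqref{eq:Nnr} (as $n\ge2r^2\ge2r$), we have $s\le r\le r^2/2\le r^2N/n$. Together with your bound for the first part, this gives $|\mathcal H_1|\le 2r^2N/n=\delta N$.
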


\begin{proof}
Put
\begin{gather*}
\mathcal A:=\bigl\{y\in V(n,r)\colon|y\cap[s+2]|\ge s+1\bigr\},\\
\mathcal H:=\bigl\{y\in V(n,r)\colon[s]\subseteq y,\
y\cap\{s+1,\dots,r+1\}\ne\varnothing\bigr\}
\cup\bigl\{[r+1]\setminus\{i\}\colon i\in[s]\bigr\}.
\end{gather*}
(in the notation of \cite{AK2} these are the families $V_1$ and $V_2$.)
By the complete nontrivial-intersection theorem
\cite[Theorem, parts~(b),~(c)]{AK2} for $n>(s+1)(r-s+1)$ the
cardinality of any nontrivial $s$-intersecting family of $r$-subsets
of $[n]$ does not exceed $\max\{|\mathcal A|,|\mathcal H|\}$. The
condition is satisfied: $s+1\le r$ (since $s<r$) and $r-s+1\le r$
(since $s\ge1$), so that $(s+1)(r-s+1)\le r^2<2r^2\le n$.

Let us estimate both families using \eqref{eq:shift}. First, splitting
$\mathcal A$ according to the cardinality
$|y\cap[s+2]|\in\{s+1,s+2\}$ and using Pascal's identity
$\bC{n-s-2}{r-s-1}+\bC{n-s-2}{r-s-2}=\bC{n-s-1}{r-s-1}$,
\begin{displaymath}
|\mathcal A|=(s+2)\bC{n-s-2}{r-s-1}+\bC{n-s-2}{r-s-2}
\le(s+2)\bC{n-s-1}{r-s-1}\le\frac{(r+1)\,r}{n}\,N
\le\frac{2r^2}{n}\,N,
\end{displaymath}
since $s+2\le r+1\le2r$. Second,
$|\mathcal H|=N-\bC{n-r-1}{r-s}+s$, and by Bernoulli's inequality
\begin{multline*}
\frac{\bC{n-r-1}{r-s}}{N}=\prod_{i=0}^{r-s-1}\frac{n-r-1-i}{n-s-i}
\ge\Bigl(1-\frac{r-s+1}{n-r+1}\Bigr)^{r-s}\\
\ge1-\frac{(r-s)(r-s+1)}{n-r+1}\ge1-\frac{2r(r-1)}{n},
\end{multline*}
where we have used $(r-s)(r-s+1)\le r(r-1)$ (since $s\ge1$) and
$n-r+1\ge n/2$ (valid for $n\ge2r$). Hence
\begin{displaymath}
|\mathcal H|\le\frac{2r(r-1)}{n}N+s\le\frac{2r(r-1)}{n}N+\frac{2r}{n}N
=\frac{2r^2}{n}\,N,
\end{displaymath}
since $s\le r\le\frac{2rN}{n}$ as $N\ge n-r\ge n/2$
(see \eqref{eq:Nnr}).
\end{proof}

\subsection{Summary of the section}

\begin{corollary}
\label{sled:Lstar}
Let $1\le s<r$, and suppose that at least one of the following two
conditions holds:
\begin{displaymath}
\text{\rm(A)}\quad n\ge32\,c\,r^2,
\qquad\text{or}\qquad
\text{\rm(B)}\quad n\ge30\,s\,r^3 .
\end{displaymath}
Then for any $D\subseteq V(n,r)$ with $|D|=N$ and $\deg D\le N/16$
one has $e(D)\ge N^2/16$.
\end{corollary}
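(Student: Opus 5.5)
The plan is to apply Lemma~\ref{lem:Lstar} with $M:=N$. The hypotheses $|D|=M$ and $\deg D\le M/16$ are given, so three facts remain to check: $N\ge16$, $\Phi<N/2$ and $\Psi\le N/16$. The first follows at once from \eqref{eq:Nnr}. Both (A) and (B) give $n\ge 30r^2$, so $N\ge n-r\ge n/2\ge16$. The other two bounds are handled separately in the two branches, using the crude estimates under (A) and the refined ones under (B).

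\emph{Case (A).} By Lemma~\ref{lem:pair} and \eqref{eq:kappa},
\[
\Phi\le\varkappa\le\frac{c\,r^2}{n}N\le\frac{N}{32}<\frac N2 .
\]
By Lemma~\ref{lem:nontriv}, $\Psi\le1+\varkappa\le1+N/32$. We need $1+N/32\le N/16$, which is equivalent to $N\ge32$. This holds because $N\ge n/2\ge16cr^2\ge16\cdot 2\cdot 4=128$, using $c\ge r\ge2$.

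\emph{Case (B).} Here $n\ge30sr^3\ge15sr^3$, so Lemma~\ref{lem:cert2} applies and gives
\[
\Phi\le\frac{2e^2r^3}{n}N\le\frac{2e^2}{30s}N\le\frac{2e^2}{30}N .
\]
Since $2e^2\approx14.78<15$, this is less than $N/2$. Also $n\ge30sr^3\ge2r^2$, so Lemma~\ref{lem:HMF} yields
\[
\Psi\le\frac{2r^2}{n}N\le\frac{2}{30sr}N\le\frac{N}{30}\le\frac N{16},
\]
where we used $sr\ge2$.

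The only delicate point is that the constants actually clear the thresholds. One is the numerical check $2e^2<15$ in case (B). The other is the absorption of the additive $1$ in the bound for $\Psi$ in case (A), which needs $N\ge32$; the lower bound $N\ge n/2$ from \eqref{eq:Nnr} supplies this. With all hypotheses of Lemma~\ref{lem:Lstar} verified, it gives $e(D)\ge N^2/16$.
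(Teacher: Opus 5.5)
Your proof is correct and follows the paper's argument essentially step for step. You apply Lemma~\ref{lem:Lstar} with $M=N$, use Lemmas~\ref{lem:pair} and~\ref{lem:nontriv} under (A) and Lemmas~\ref{lem:cert2} and~\ref{lem:HMF} under (B), and make the same numerical checks ($2e^2<15$, $N\ge32$); the only cosmetic difference is that you get the lower bound on $N$ from $N\ge n/2$ instead of the paper's $N\ge\frac{29}{30}n$.
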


\begin{proof}
It suffices to verify the conditions of Lemma~\ref{lem:Lstar} for
$M=N$, i.e., the inequalities $\Phi<N/2$ and $\Psi\le N/16$. The
condition $M\ge16$ is satisfied in both cases: in case (B), from
$s\ge1$ it follows that $n\ge30r^3$, and in case (A) the same follows
from $c=\bC rs\ge r$ (since $1\le s\le r-1$), since then
$n\ge32cr^2\ge32r^3$. Hence $n\ge30\cdot2^3=240$ (since $r\ge2$) and
$r\le r^3\le n/30$, so that by \eqref{eq:Nnr}
\begin{displaymath}
M=N\ \ge\ n-r\ \ge\ \tfrac{29}{30}\,n\ \ge\ 232 ;
\end{displaymath}
in particular $N\ge32\ge16$.

In case (A), by \eqref{eq:kappa} and condition (A) we have
$\varkappa\le c\,r^2N/n\le N/32$, so that
Lemmas~\ref{lem:pair} and~\ref{lem:nontriv} give $\Phi\le N/32<N/2$ and
$\Psi\le1+N/32\le N/16$ (the last inequality, since $N\ge32$).

In case (B), from $n\ge30sr^3\ge15sr^3$ and Lemma~\ref{lem:cert2} it
follows that
\begin{displaymath}
\Phi\le\frac{2e^2r^3}{n}N\le\frac{2e^2}{30}\,N<\frac N2,
\end{displaymath}
since $n\ge30r^3$ and $2e^2<15$. Moreover,
$n\ge30sr^3\ge30r^3\ge32r^2$ for $r\ge2$, so that
Lemma~\ref{lem:HMF} is applicable (its condition $n\ge2r^2$ is
satisfied) and $\Psi\le2r^2N/n\le N/16$.

\end{proof}

\begin{remark}
\label{zam:whereuslovie}
Condition \eqref{eq:uslovie} is used in the paper \emph{only} in
Corollary~\ref{sled:Lstar}, and moreover in the following form: from
\eqref{eq:uslovie} it follows that for all sufficiently large $n$ one
has $\min\{32\bC rs\,r^2,\,30sr^3\}\le n$, i.e., for each such $n$
either (A) or (B) holds (which one exactly is unimportant and may
depend on $n$). All the other arguments of the proof of part (i)
require only $r^2=o(n)$; in part (ii) it suffices that
$r^2\ln^2n=o(n)$ (Remark~\ref{zam:iii}).
\end{remark}

\section{Proof of part (i)}
\label{sec:upper}

By monotonicity (Remark~\ref{zam:monot}) it suffices to prove the
assertion for $p=p_1=64\Lambda/N$. First of all, $p\to0$: by
\eqref{eq:Lam} and \eqref{eq:Nnr} (whence $N\ge n-r\ge n/2$)
\begin{displaymath}
p_1\le\frac{256\,r(\ln n+1)}{n}\to0,
\end{displaymath}
since $r^2=o(n)$ implies $r=o(\sqrt n)$ and hence $r\ln n=o(n)$.

Following \cite{OgRai}, let us introduce random variables. For
$i\in\mathbb N$ let $X_i$ be the number of sets $D$ independent in
$G_p$ of cardinality $N$ with degree $\deg D=N-i$. Let $Y$ be the
number of sets independent in $G_p$ of cardinality $N+1$ containing a
whole star. If $Y=0$ and $X_i=0$ for all $i\ge1$, then, first, every
independent set of cardinality $N$ has degree $N$, i.e., contains $N$
vertices of some star and hence coincides with it; second, there are
no independent sets of cardinality $N+1$: such a set $F$ would contain
an independent subset of cardinality $N$, i.e., (by what has just been
proved) a star, and then $F$ would be counted by the quantity $Y$,
contrary to $Y=0$; finally, every independent set of cardinality
greater than $N+1$ contains an independent subset of cardinality
$N+1$. Since $\alpha(G_p)\ge N$ (a star is independent in $G$, and a
fortiori in $G_p$), in this case $\alpha(G_p)=N$ and all independent
sets of maximum cardinality are stars.

Thus, it suffices to prove that
$\PP\bigl(Y>0\text{ or }\exists\,i\ge1\colon X_i>0\bigr)\to0$.
Fix $\varepsilon_0:=1/10$ and split the range of values of $i$ into
three parts:
\begin{displaymath}
\text{(1)}\ 1\le i\le\varepsilon_0N,\qquad
\text{(2)}\ \varepsilon_0N<i\le\tfrac{15}{16}N,\qquad
\text{(3)}\ i>\tfrac{15}{16}N .
\end{displaymath}
Note that case (3) corresponds exactly to the sets with degree
$\deg D=N-i<N/16$, i.e., those to which Corollary~\ref{sled:Lstar} is
applicable.

\subsection{The quantity $Y$}
\label{sec:Y}

A set counted by the quantity $Y$ is $S_W\cup\{A\}$, where $W$ is the
center of a star and $A\notin S_W$ is a vertex intersecting $W$ in
$t\le s-1$ elements; the number of such pairs $(W,A)$ equals
$\bC ns(V-N)=e^{\Lambda}$. By Lemma~\ref{lem:Mt} the vertex $A$ is
joined in $G$ to at least $(1-\delta)N$ vertices of the star $S_W$, and
all these edges must be absent in $G_p$. Therefore for large $n$ (when
$\delta\le1/2$)
\begin{displaymath}
\EE(Y)\le e^{\Lambda}(1-p)^{(1-\delta)N}
\le\exp\bigl\{\Lambda-(1-\delta)\,pN\bigr\}
=\exp\bigl\{\Lambda\bigl(1-64(1-\delta)\bigr)\bigr\}
\le e^{-31\Lambda}\to0,
\end{displaymath}
since $\Lambda\to\infty$. By Markov's inequality $\PP(Y>0)\to0$.

\subsection{Case 1: $1\le i\le\varepsilon_0N$}
\label{sec:case1}

The direct estimate $\PP(X_i>0)\le\EE(X_i)$ does not achieve the goal
in this range, and, as in \cite{BNR,OgRai}, we pass to \emph{maximal}
independent sets. For $i\ge1$, $j\ge i$ denote by $X_{i,j}$ the number
of maximal (with respect to inclusion) sets independent in $G_p$ of
cardinality $N-i+j$ with degree $N-i$. Let us show that
\begin{equation}
\label{eq:cover}
\Bigl\{\exists\,i\in[1,\varepsilon_0N]\colon X_i>0\Bigr\}
\subseteq\{Y>0\}\cup
\Bigl\{\exists\,i'\in[1,\varepsilon_0N],\ \exists\,j'\ge i'\colon
X_{i',j'}>0\Bigr\}.
\end{equation}
Indeed, let $D$ be independent in $G_p$, $|D|=N$,
$\deg D=N-i\le N-1$. Extend $D$ to a maximal independent in $G_p$ set
$\overline D\supseteq D$; then $|\overline D|\ge N$ and
$\deg\overline D\ge\deg D$, i.e., $\deg\overline D=N-i'$ for some
$0\le i'\le i$. If $i'=0$, then $\overline D$ contains a whole star
$S_W$; moreover $\overline D\ne S_W$ (otherwise $D\subseteq S_W$ and
$|D|=N$ imply $D=S_W$, $\deg D=N$, contrary to $i\ge1$), so that
$|\overline D|\ge N+1$ and any $(N+1)$-element subset of
$\overline D$ containing $S_W$ gives $Y>0$. If, however, $i'\ge1$,
then $\overline D$ is counted by the quantity $X_{i',j'}$ with
$j'=|\overline D|-(N-i')\ge i'$, where $i'\le i\le\varepsilon_0N$.
Inclusion \eqref{eq:cover} is proved.

The event $\{Y>0\}$ has already been treated, so it suffices to prove
that
\begin{displaymath}
\Sigma_1:=\sum_{i=1}^{\varepsilon_0N}\ \sum_{j\ge i}\EE(X_{i,j})\to0.
\end{displaymath}

Let us estimate $\EE(X_{i,j})$. To each set $D$ counted by the
quantity $X_{i,j}$ there corresponds at least one triple
$(W,A_1,A_2)$, and from the triple the set $D$ is recovered uniquely;
therefore the number of such $D$ does not exceed the number of such
triples. Here $W$ is the center of the star on which the degree is
attained ($\bC{n}{s}$ choices),
$A_1:=S_W\setminus D$, $|A_1|=i$ ($\bC{N}{i}$ choices), and
$A_2:=D\setminus S_W$, $|A_2|=j$ ($\bC{V-N}{j}$ choices). Moreover:
\begin{itemize}
\item each vertex $u\in A_2$ is joined in $G$, by
Lemma~\ref{lem:Mt}, to at least $(1-\delta)N$ vertices of the star
$S_W$, of which at most $i$ lie in $A_1$; hence from $u$ to
$D\cap S_W$ there lead at least $(1-\delta)N-i$ edges of the graph
$G$, and all of them (for distinct $u\in A_2$ these are pairwise
distinct edges) are absent in $G_p$;
\item by the maximality of $D$, each vertex $a\in A_1$ is joined in
$G_p$ to at least one vertex of the set $D$; but
$D\setminus A_2\subseteq S_W$, and a star is independent, so this
neighbor lies in $A_2$. The vertex $a$ has at most $j$ neighbors in
$A_2$, so the probability of this event does not exceed $jp$; for
distinct $a\in A_1$ these events are determined by pairwise disjoint
sets of edges and are independent of the edges of the previous item.
\end{itemize}
Consequently,
\begin{equation}
\label{eq:Xij}
\EE(X_{i,j})\le B_{i,j}:=
\bC{n}{s}\bC{N}{i}\bC{V-N}{j}\,
(1-p)^{j((1-\delta)N-i)}\,(jp)^i.
\end{equation}
For $i\le\varepsilon_0N$ and $\delta\le\varepsilon_0$ (large $n$)
\begin{equation}
\label{eq:08}
(1-\delta)N-i\ge(1-2\varepsilon_0)N=0.8\,N .
\end{equation}

Fix $i$ and let us show that $B_{i,j+1}\le\rho\,B_{i,j}$ for all
$j\ge i$, where $\rho:=\exp\{1+\ln V-0.8\,pN\}$. For $B_{i,j+1}=0$
this is trivial, and for $B_{i,j+1}>0$ (then $j<V-N$), taking
\eqref{eq:08} into account,
\begin{displaymath}
\frac{B_{i,j+1}}{B_{i,j}}
=\frac{V-N-j}{j+1}\,(1-p)^{(1-\delta)N-i}\Bigl(1+\frac1j\Bigr)^i
\le e\,V\,e^{-0.8\,pN}=\rho
\end{displaymath}
(here $(1+1/j)^i\le e^{i/j}\le e$ for $j\ge i$). By \eqref{eq:LamV} and
$pN=64\Lambda$
\begin{displaymath}
\ln\rho\le1+\ln2+\Lambda-51.2\,\Lambda\le-50\,\Lambda\to-\infty
\end{displaymath}
(the last inequality is equivalent to $1+\ln2\le0.2\,\Lambda$ and
therefore holds for $\Lambda\ge9$; recall that $\Lambda\ge r\ln\frac
nr\to\infty$). In particular, $\rho\le1/2$ for large $n$, whence by
induction
$B_{i,j}\le B_{i,i}\rho^{\,j-i}$ and
$\sum_{j\ge i}\EE(X_{i,j})\le2B_{i,i}$.

Let us estimate $B_{i,i}$ by \eqref{eq:binom}, \eqref{eq:08} and
$i\ge1$:
\begin{displaymath}
B_{i,i}\le\bC ns\Bigl[\frac{eN}{i}\cdot\frac{eV}{i}\cdot ip\cdot
e^{-0.8\,pN}\Bigr]^i
=\bC ns\Bigl[\frac{e^2\,V\,(pN)}{i}\,e^{-0.8\,pN}\Bigr]^i
\le\bC ns\,T^i,
\end{displaymath}
where $T:=e^2V\,(pN)\,e^{-0.8\,pN}$.
Since $\ln(pN)=\ln(64\Lambda)=o(\Lambda)$, for large $n$
\begin{displaymath}
\ln T\le2+\ln2+\Lambda+\ln(64\Lambda)-51.2\,\Lambda
\le-50\,\Lambda
\end{displaymath}
(the last inequality is equivalent to
$2+\ln2+\ln(64\Lambda)\le0.2\,\Lambda$ and therefore holds for
$\Lambda\ge56$); in particular $T\le1/2$. Finally, using
$\ln\bC ns\le\Lambda$ (see \eqref{eq:LamV}),
\begin{displaymath}
\Sigma_1\le2\bC ns\sum_{i\ge1}T^i\le4\bC ns\,T
\le4\exp\{\Lambda-50\Lambda\}\to0 .
\end{displaymath}

\subsection{Case 2: $\varepsilon_0N<i\le\frac{15}{16}N$}
\label{sec:case2}

Here Markov's inequality suffices. As above, to each set $D$ counted
by the quantity $X_i$ there corresponds at least one triple
$(W,A_1,A_2)$, from which it is recovered uniquely, where $W$ is the
center of the star on which the degree is attained, $A_1:=S_W\setminus D$
and $A_2:=D\setminus S_W$; since $|D|=N$ and $|D\cap S_W|=N-i$, here
$|A_1|=|A_2|=i$ (the maximality of $D$ is not assumed). Put
$d:=\deg D=N-i\ge\frac{N}{16}$. As above, from each vertex
$u\in A_2$ to $D\cap S_W$ there lead at least $(1-\delta)N-i=d-\delta N$
edges of the graph $G$, pairwise distinct for distinct $u$, and all of
them are absent in $G_p$. Since $d\ge N/16$, we have
\begin{displaymath}
d-\delta N\ge(1-\gamma)\,d,\qquad
\gamma:=16\delta=\frac{32\,r^2}{n}\to0 .
\end{displaymath}
Hence, from $p\,d\ge pN/16=4\Lambda$, the estimates \eqref{eq:binom}
and the inequality $i>\varepsilon_0N$ we obtain
\begin{displaymath}
\EE(X_i)\le\bC ns\bC Ni\bC{V-N}{i}(1-p)^{i(1-\gamma)d}
\le\bC ns\Bigl[\frac{eN}{i}\cdot\frac{eV}{i}\,
e^{-(1-\gamma)\,4\Lambda}\Bigr]^i
\le\bC ns\Bigl[\frac{e^2}{\varepsilon_0^2}\cdot\frac VN\,
e^{-3\Lambda}\Bigr]^i
\end{displaymath}
for large $n$ (when $\gamma\le1/4$). By \eqref{eq:LamV} the logarithm
of the square bracket does not exceed
\begin{displaymath}
2+2\ln\frac1{\varepsilon_0}+\bigl(\Lambda+\ln2-\ln N\bigr)-3\Lambda
\le8-2\Lambda-\ln N\le-2\Lambda
\end{displaymath}
for large $n$ (since $\ln N\ge\ln(n-r)\ge8$ by
\eqref{eq:Nnr}). Hence, summing over at most $N$ values of $i$ and
using $i>\varepsilon_0N$ and $\ln\bC ns\le\Lambda$,
\begin{displaymath}
\sum_{\varepsilon_0N<i\le\frac{15}{16}N}\PP(X_i>0)
\le\exp\Bigl\{\ln N+\Lambda-2\varepsilon_0\,\Lambda N\Bigr\}\to0,
\end{displaymath}
since $\ln N\le2r\ln n$ by \eqref{eq:binom}, while
$\Lambda N\gg r\ln n+\Lambda$.

\subsection{Case 3: $i>\frac{15}{16}N$}
\label{sec:case3}

As noted above, here $\deg D<N/16$, and by
Corollary~\ref{sled:Lstar} (for each sufficiently large $n$ either
(A) or (B) holds --- see Remark~\ref{zam:whereuslovie}) every set $D$
counted contains at least $N^2/16$ edges of the graph $G$, and all of
them are absent in $G_p$. The number of $N$-element subsets of
$V(n,r)$ does not exceed $\bC VN\le(eV/N)^N$ by \eqref{eq:binom}, so
that, using $pN^2/16=4\Lambda N$ and \eqref{eq:LamV},
\begin{align*}
\sum_{i>\frac{15}{16}N}\PP(X_i>0)
&\le\exp\Bigl\{N\Bigl(1+\ln\frac VN\Bigr)-\frac{p\,N^2}{16}\Bigr\}
\le\exp\bigl\{N\bigl(1+\ln2-\ln N+\Lambda\bigr)-4\Lambda N\bigr\}\\
&\le\exp\{-3\,\Lambda N\}\to0,
\end{align*}
since $1+\ln2-\ln N\le0$ for $N\ge6$.

\medskip
Collecting the estimates of Subsection~\ref{sec:Y} and of Cases 1--3, we
obtain $\PP\bigl(Y>0\text{ or }\exists\,i\colon X_i>0\bigr)\to0$, which
completes the proof of part (i). \hfill$\square$

\section{Proof of part (ii)}
\label{sec:lower}

By monotonicity it suffices to consider $p=p_2$; part (ii) is proved
under the assumptions $r\ge2$, $1\le s<r$, $r^2\ln^2n=o(n)$, which are
weaker than the hypotheses of Theorem~\ref{th:main}
(Remark~\ref{zam:iii}); in particular, $r=o\bigl(\sqrt n/\ln n\bigr)$
and $\delta\to0$. Note that, according to \eqref{eq:Nnr},
\begin{displaymath}
p_2\ \le\ \frac{(r+s)\ln n}{n-r}\ \le\ \frac{4r\ln n}{n}\ \to\ 0 ;
\end{displaymath}
in particular, $p\le1/2$ for large $n$. By \eqref{eq:Lam}
\begin{equation}
\label{eq:Lambda2}
\Lambda\ \ge\ s\ln\frac ns+r\ln\frac nr-1\ \ge\
\frac{pN}{1-\varepsilon}-1,\qquad
\Lambda\ \ge\ r\ln\frac nr\to\infty .
\end{equation}

Consider the random variable $Y$ from Section~\ref{sec:upper}: the
number of sets independent in $G_p$ of the form $S_W\cup\{A\}$,
$A\notin S_W$. If $Y>0$, then $\alpha(G_p)\ge N+1$; let us prove that
$\PP(Y>0)\to1$ by the second-moment method.

Let us enumerate the admissible pairs $\pi=(W,A)$ (the center of a star
and a vertex $A\notin S_W$); their number equals
$K:=\bC{n}{s}(V-N)=e^{\Lambda}$. For a pair $\pi$ let $I_\pi$ be the
indicator of independence of the set $S_W\cup\{A\}$ in $G_p$; then
$Y=\sum_\pi I_\pi$. Indeed, every independent set of cardinality $N+1$
containing a star contains exactly one star: for $W\ne W'$
\begin{displaymath}
|S_W\cup S_{W'}|\ge2N-|S_W\cap S_{W'}|\ge2N-\bC{n-s-1}{r-s-1}>N+1,
\end{displaymath}
since $\bC{n-s-1}{r-s-1}\le Nr/n=o(N)$ by \eqref{eq:shift}, while
$N\to\infty$. Hence such a set can be written uniquely in the form
$S_W\cup\{A\}$, $A\notin S_W$, and the pairs $\pi$ with $I_\pi=1$ are
in one-to-one correspondence with the sets counted by the quantity
$Y$. A star is independent in $G$, so the independence of
$S_W\cup\{A\}$ is equivalent to the absence in $G_p$ of all edges of
the set
\begin{displaymath}
E_\pi:=\bigl\{\{A,w\}\in E(G)\colon w\in S_W\bigr\},\qquad
g_\pi:=|E_\pi|,
\end{displaymath}
and $\EE(I_\pi)=(1-p)^{g_\pi}$. By Lemma~\ref{lem:Mt} the inequalities
$(1-\delta)N\le g_\pi\le N$ hold.

\textbf{First moment.} Since $(1-p)^N\ge e^{-pN-p^2N}$ by
\eqref{eq:pp} and
\begin{displaymath}
p^2N=\frac{(pN)^2}{N}\le\frac{(2r\ln n)^2}{n-r}
\le\frac{8\,r^2\ln^2n}{n}=o(1)
\end{displaymath}
(here $pN\le r\ln\frac nr+s\ln\frac ns\le2r\ln n$ and
$N\ge n-r\ge n/2$ by \eqref{eq:Nnr}, while $r^2\ln^2n=o(n)$ is an
assumption),
from \eqref{eq:Lambda2} we obtain
\begin{equation}
\label{eq:EY}
\EE(Y)\ge K(1-p)^{N}\ge\exp\{\Lambda-pN-o(1)\}
\ge\exp\{\varepsilon\Lambda-2\}\to\infty.
\end{equation}

\textbf{Second moment.} We have
\begin{displaymath}
\EE\bigl(Y(Y-1)\bigr)=\sum_{\pi\ne\pi'}
\PP\bigl(I_\pi=I_{\pi'}=1\bigr)
=\sum_{\pi\ne\pi'}(1-p)^{|E_\pi\cup E_{\pi'}|}
=\sum_{\pi\ne\pi'}(1-p)^{g_\pi+g_{\pi'}}\,
(1-p)^{-|E_\pi\cap E_{\pi'}|}.
\end{displaymath}
Let us compare this sum with
$(\EE Y)^2=\sum_{\pi,\pi'}(1-p)^{g_\pi+g_{\pi'}}$, analyzing the
intersections $E_\pi\cap E_{\pi'}$ for $\pi=(W,A)\ne\pi'=(W',A')$.

\emph{1) $A\ne A'$.} An edge $\{A,w\}\in E_\pi$ belongs to
$E_{\pi'}$ only if $\{A,w\}=\{A',w'\}$ for some $w\in S_W$,
$w'\in S_{W'}$; since $A\ne A'$, this is possible only for $w=A'$,
$w'=A$, i.e., there is at most one common edge --- the edge
$\{A,A'\}$, and only when $A'\in S_W$ and $A\in S_{W'}$. Hence,
according to \eqref{eq:pp},
\begin{displaymath}
(1-p)^{-|E_\pi\cap E_{\pi'}|}\ \le\ (1-p)^{-1}\ \le\ 1+2p ,
\end{displaymath}
and the contribution of such pairs does not exceed $(1+2p)(\EE Y)^2$.
(For $W=W'$, $A\ne A'$ we even have $E_\pi\cap E_{\pi'}=\varnothing$,
since $A,A'\notin S_W$.)

\emph{2) $A=A'$, $W\ne W'$.} The common edges have the form
$\{A,w\}$, where $w\in S_W\cap S_{W'}$, i.e., $w\supseteq W\cup W'$,
$|W\cup W'|\ge s+1$; there are at most $\bC{n-s-1}{r-s-1}\le Nr/n$ of
them by \eqref{eq:shift}. Hence, according to \eqref{eq:pp},
\begin{displaymath}
(1-p)^{-|E_\pi\cap E_{\pi'}|}\ \le\ e^{2p|E_\pi\cap E_{\pi'}|}
\ \le\ e^{2pNr/n} .
\end{displaymath}
The number of such pairs
$(\pi,\pi')$ does not exceed $\bigl(\bC ns\bigr)^2(V-N)$ (we choose the
centers $W\ne W'$ and then the vertex $A\notin S_W$), so their
contribution does not exceed
\begin{displaymath}
\bigl(\bC ns\bigr)^2(V-N)\,(1-p)^{2(1-\delta)N}\,e^{2pNr/n}
\le(\EE Y)^2\,
\frac{\exp\bigl\{4pN\delta+2pNr/n\bigr\}}{V-N},
\end{displaymath}
where we have taken into account $(\EE Y)^2\ge K^2(1-p)^{2N}$ and
$(1-p)^{-2\delta N}\le e^{4pN\delta}$. Since $pN\le2r\ln n$ and
$2\delta+\frac rn\le\frac{5r^2}{n}$, the exponent in the numerator does
not exceed
\begin{displaymath}
2pN\Bigl(2\delta+\frac{r}{n}\Bigr)
\le4r\ln n\cdot\frac{5r^2}{n}=\frac{20\,r^3\ln n}{n}
\end{displaymath}
whereas, using the inequality $V-N\ge V/2$ (Section~\ref{sec:prelim})
and \eqref{eq:binom},
\begin{displaymath}
\ln(V-N)\ge\ln\frac V2\ge r\ln\frac nr-\ln2\ge\frac r2\,\ln n-\ln2
\end{displaymath}
for large $n$ (here $r\le\sqrt n$, whence
$\ln\frac nr\ge\frac12\ln n$). Since $r^2\ln^2n=o(n)$, we have
\begin{displaymath}
\frac{20\,r^3\ln n}{n}=20\,r\ln n\cdot\frac{r^2}{n}
=o\Bigl(\frac{r}{\ln n}\Bigr),
\end{displaymath}
and therefore
\begin{displaymath}
\frac{20\,r^3\ln n}{n}-\ln(V-N)\ \le\
o\Bigl(\frac{r}{\ln n}\Bigr)-\frac r2\,\ln n+\ln2\ \longrightarrow\
-\infty
\end{displaymath}
(since $r\ge2$). Hence the contribution of pairs of type 2) is
$o\bigl((\EE Y)^2\bigr)$.

\emph{3) $A=A'$, $W=W'$} --- excluded by the condition $\pi\ne\pi'$.

Thus,
\begin{displaymath}
\EE\bigl(Y(Y-1)\bigr)\le(1+2p)(\EE Y)^2+o\bigl((\EE Y)^2\bigr)
=(1+o(1))(\EE Y)^2,
\end{displaymath}
whence
\begin{displaymath}
\DD(Y)=\EE\bigl(Y(Y-1)\bigr)+\EE Y-(\EE Y)^2
\ \le\ o\bigl((\EE Y)^2\bigr)+\EE Y
\end{displaymath}
and by Chebyshev's inequality
\begin{displaymath}
\PP(Y=0)\le\frac{\DD(Y)}{(\EE Y)^2}\le o(1)+\frac1{\EE Y}\to0
\end{displaymath}
by \eqref{eq:EY}. Part (ii) is proved. \hfill$\square$

\section{Concluding remarks}
\label{sec:concl}

\textbf{1. The sharp constant.} Theorem~\ref{th:main} determines the
critical probability up to an absolute factor; the natural next
question is its exact value. It is clear from the proof where the
constant is lost: in Section~\ref{sec:upper} only one corollary about
families of small degree is used --- Corollary~\ref{sled:Lstar} with
constants $\beta=\eta=\frac1{16}$ (``$\deg D\le\beta N$ implies
$e(D)\ge\eta N^2$''), and the entire bound of part (i) is
$C_0\Lambda/N$ with $C_0=\max\{4/\beta,2/\eta,4\}$. Indeed, in
Case~3 the inequality $C_0\eta\ge2$ is used, in Case~2 $C_0\beta\ge4$,
and for the quantity $Y$ and Case~1 only $C_0\ge4$; the repetition of
the computations of Section~\ref{sec:upper} with arbitrary
$\beta\in(0,\frac12]$, $\eta>0$ carries over verbatim (one needs only
$C_0\Lambda=o(N)$ and $\delta/\beta\to0$). Thus, the gap between parts
(i) and (ii) equals $\asymp\max\{1/\beta,1/\eta\}$, and any improvement
of the constants in Lemma~\ref{lem:Lstar} immediately improves the
result. The constants $\beta=\eta=\frac1{16}$ in Lemma~\ref{lem:Lstar}
are certainly not optimal, but $C_0=1+o(1)$ cannot be obtained by this
method: let $\beta=1/k$ with integer $k\ge2$, let $W_1,\dots,W_k$ be
pairwise disjoint $s$-sets and $m:=\lfloor N/k\rfloor$. Take
$D=D_1\sqcup\dots\sqcup D_k$, where
$D_i\subseteq S_{W_i}\setminus\bigcup_{j\ne i}S_{W_j}$ and $|D_i|=m$;
this is possible for large $n$, since, according to \eqref{eq:shift},
\begin{displaymath}
\Bigl|S_{W_i}\setminus\bigcup_{j\ne i}S_{W_j}\Bigr|
\ \ge\ N-(k-1)\bC{n-s-1}{r-s-1}\ \ge\ N-\frac{kNr}{n}
\ =\ (1-o(1))\,N\ \ge\ m .
\end{displaymath}
The parts are pairwise disjoint, so $M:=|D|=km$, and moreover
$\deg D=m=\beta M$: exactly $m$ vertices of $D$ lie in $S_{W_i}$, while
for any center $T\notin\{W_1,\dots,W_k\}$ again by \eqref{eq:shift}
\begin{displaymath}
|D\cap S_T|\ \le\ k\,\bC{n-s-1}{r-s-1}\ \le\ \frac{kNr}{n}\ =\ o(N)
\ <\ m .
\end{displaymath}
Inside each part there are no edges, so the number of pairs of
non-neighbors is at least $k\bC m2$ and
\begin{displaymath}
e(D)\le\bC M2-k\bC m2=\frac{M^2}{2}\Bigl(1-\frac1k\Bigr)+O(M)
=(1+o(1))\,\frac{(1-\beta)M^2}{2},
\end{displaymath}
whereas $|D|=M=(1-o(1))N$ and $\deg D\le\beta M$. Hence, in
Lemma~\ref{lem:Lstar} one cannot replace $\eta$ by any constant
greater than $(1-\beta)/2$. To obtain the sharp threshold, a different
approach is apparently needed --- for $s=1$ it is known
\cite{BNR,BKL}. In this connection we also note the paper \cite{Kosh},
where the threshold for exact stability is found for the graphs
$G(n,r,s)$ (an edge is an intersection in \emph{exactly} $s$ elements)
for $r>2s+2$.

\textbf{2. Condition \eqref{eq:uslovie}.} As noted in
Remark~\ref{zam:whereuslovie}, condition \eqref{eq:uslovie} is used
only in Corollary~\ref{sled:Lstar}, and moreover only in order to
ensure the inequalities $\Phi<N/2$ and $\Psi\le N/16$; all the other
arguments of part (i) remain valid for $r^2=o(n)$, and part (ii) for
$r^2\ln^2n=o(n)$ (Remark~\ref{zam:iii}). For $s=1$, condition
\eqref{eq:uslovie} gives $r^3=o(n)$ --- exactly the range at which the
paper \cite{BNR} stopped and which was overcome only at the cost of
invoking essentially new ideas \cite{DasTran,DevlinKahn,BKL}; for
fixed $2\le s\le r-3$ our method reaches the same barrier
(Remark~\ref{zam:uslovie}); for $s\to\infty$ the second branch of the
condition, $sr^3=o(n)$, is stricter. This suggests that further
progress requires not a refinement of the estimates, but a different
method. Note also that the bounds for $\Phi$ and $\Psi$ are
independent of each other: a refinement of either of them extends the
range of applicability of Theorem~\ref{th:main}. In particular, an
elementary (not relying on \cite{AK2}) proof of Lemma~\ref{lem:HMF} for
growing $s$ would make the whole paper self-contained;
Lemma~\ref{lem:nontriv} gives here only the bound
$\Psi\lesssim c\,r^2N/n$, i.e., it loses the factor $c$ when taking the
union over $c$ stars whose centers lie in a single vertex.

\textbf{3. Range of applicability.} In the range \eqref{eq:uslovie},
part (i) is no weaker, up to the order of magnitude, than the bounds
\cite{KOR,RaiKaras,PR}, which are polynomial in $n$, and for $r-s\to\infty$ is superpolynomially
stronger. Outside this range, for $s\ge2$ their results remain the
best known; examples in both directions are collected in
Remark~\ref{zam:compare}. For $s=1$ the complete picture is given by
the Kneser results \cite{BNR,DasTran,DevlinKahn,BKL}.

\end{document}